\newtheorem{theorem}{Theorem}[section]
\newtheorem{lemma}[theorem]{Lemma}
\newtheorem{corollary}[theorem]{Corollary}
\newtheorem{proposition}[theorem]{Proposition}
\theoremstyle{definition}
\newtheorem{remark}[theorem]{Remark}
\newtheorem{definition}[theorem]{Definition}
\newtheorem{examples}[theorem]{Examples}
\newtheorem{example}[theorem]{Example}
\def\R{\mathbb{R}}
\def\F{\mathcal{F}}
\def\U{\mathcal{U}}
\def\L{\mathcal{L}}
\def\C{\mathbb{C}}    
\def\s{\mathbb{S}}   
\def\B{\mathbb{B}}    
\def\H{\mathcal H}     
\def\e{\varepsilon}       
\def\l{\lambda}
\def\0{\underline 0}
\def\n{\noindent}
\def\nn{\vskip.2cm \noindent}
\def\uz{{z}}
\def\oz{\overline{z}}
\def\nz{\mathbf{z}}
\def\n0{\mathbf{0}}
\def\ny{\mathbf{y}}
\def\np{\mathbf{p}}
\def\nv{\mathbf{v}}
\def\nw{\mathbf{w}}
\author{Beatriz Limon Gutierrez and Jose Seade}
\title{Morse Theory and the topology of holomorphic foliations near an isolated singularity\footnote{Research partially supported by PAPIIT-UNAM, CONACYT (Mexico) grants 55084 and 162340, and by  CNRS-CONACYT  through the Laboratorio International  Solomon Lefschetz (LAISLA).   \newline $\quad$ {\it
Key-words:} Holomorphic foliation, Morse function, Morse index, polar variety, contact point, gradient flow.
\newline   {\it Mathematics Subject Classification.} Primary: 32S65, 37F75  
  Secondary:  53C12, 58F18 }}
\author{ Beatriz Lim\'on and Jos\'e Seade  \\[12pt]
Instituto de Matem\'aticas, Unidad Cuernavaca\\
Universidad Nacional Aut\'onoma de M\'exico}
\begin{document}
\date{}
\maketitle
\vskip0,2cm\noindent
\setcounter{section}{0}

\begin{abstract}
Let $\F$ be the germ at $\n0 \in \C^n$ of a holomorphic foliation of dimension $d$, $1 \le d < n$,  with an isolated singularity at $\n0$.
We study its geometry and topology using ideas that originate in the work of Thom concerning Morse theory for foliated manifolds. Given $\F$ and a real analytic  function $g$   on $\C^n$ with a Morse critical point of index 0 at $\n0$, we look at the corresponding polar variety $M= M(\F,g)$. These are  the  points of contact of the two foliations, where $\F$ is tangent to the fibres of $g$.    This is analogous to the usual theory of polar varieties in algebraic geometry, where  holomorphic functions are studied by looking at the intersection of their fibers  with those of  a linear form. Here we replace the linear form by a real quadratic map, the Morse function $g$.
  We then study  $\F$ by looking at the intersection of its leaves with the level sets of $g$, and the way how these intersections change as the sphere gets smaller. 
\end{abstract}

\section*{Introduction}
In 1964 Ren\'e Thom wrote a beautiful article \cite{Thom} explaining how the classical ideas of Morse theory  can be adapted to studying the topology of non-singular foliations on smooth manifolds.  
The purpose of this work is to adapt those ideas  to studying the topology of holomorphic foliations near an isolated singular point. 
      
  We consider a holomorphic foliation $\F$ of dimension $d \ge 1$ defined on an open neighbourhood $\U$ of the origin $\n0 \in \C^n$, with a unique singular point at $\n0$. To study its topology, we consider a  function $g$ on $\U$ with a Morse critical point at $\n0$ of index $0$, so that its non-critical levels  are diffeomorphic to spheres; call these simply ``spheres". Look  at the restriction  of $g$ to the leaves of $\F$. The critical points of  $g|_\L$ are the points of contact of $\F$ with the fibres of $g$,  where the leaves of $\F$ are tangent to the fibres of $g$.
 The set of all such points is the {\it polar variety} of $\F$ relative to $g$, in 
   analogy with this classical notion  in algebraic geometry, where the role of $g$ is usually played by linear forms.

  Inspired by the way how J. Milnor studied the topology of complex singularities in  \cite{Milnor:singular}, we want to study the topology of $\F$ near $\n0$ by looking at the intersection of the leaves of $\F$ with a sufficiently small sphere $\s_\e$, and the way how these intersections change as we make the radius of the sphere tend to $0$. 
  In fact this  point of view
  for studying holomorphic foliations already appears in  \cite{Arnold, CKP, GSV}, and this work is very much indebted those articles, specially to \cite{GSV}.

Of course the contacts of 
 the leaves of $\F$ with the fibres of $g$ can be degenerate. If the contacts are all non-degenerate, then we say that $\F$ {\it carries a Morse structure compatible with} $g$, or simply that $M$ carries the Morse structure of $g$, for short.
In the sequel we give various examples of both, degenerate and non-degenerate contacts.
   
   Our first result (Theorem \ref{Theorem 1})  is the analogous in our setting of a classical result for polar varieties 
(see for instance \cite{Le}):  we prove that the restriction of $g$ to the leaves of $\F$ is a Morse function on each leaf if and only if away from $\n0$, 
   the polar variety  $ M^*= M\setminus \{\n0\}$ is a smooth, reduced, 
 submanifold of $\mathbb{C}^n$ of codimension $2d$ and $M^*$ is everywhere transversal to $\mathcal{F}$. 
   
   The topology of holomorphic foliations near a singular point can be rather complicated. 
   In order to have a certain understanding of the behaviour of $\F$ near $\n0$ we ask for the condition that the polar set $M = M ( \F , g)$ be real analytic at $\n0$. This happens in many 
    interesting families, including all foliations of dimension or codimension one. In this case we say that the foliation is {\it contact-analytic} with 
respect to the Morse function $g$. We  have (Theorem \ref{Theorem 2}) that if 
$\mathcal{F}$ is   contact-analytic and   carries the Morse structure of $g$, then the corresponding polar variety has finitely many 
 irreducible components, say $M_1,...,M_r$, all of them pairwise disjoint away from $\n0$; each of these is smooth away from $\n0$, of real codimension $2d$, transversal to the foliation and consists of points where the contacts have all the same Morse index.

   Using this, we can give the following topological picture of $\F$ near $\n0$ (Theorem \ref{t: Morse theory}):  we equip $\F$  with {\it the gradient flow on the leaves}, {\it i.e.}, the flow
   $\mathcal G_\F$     of the    vector field on $\U$ obtained  by projecting  at each point the gradient of $g$   to the tangent space of $\F$. Let $\B_\e$ be a sufficiently small $g$-ball  centred at $\n0$. This ball splits in two disjoint
   $\mathcal G_\F$-invariant sets: the saturated $\widehat M$ of $M \cap \B_\e$ by $\F$ and its complement
   $K := \B_\e \setminus \widehat M$. 
   On $K$ the topology is somehow simple and  the dynamics can be  rich, while on $\widehat M$ the topology is rich and the dynamics is often simpler.

      Each leaf $\L$  in $K$ is homeomorphic to  a product $(\L \cap \s_\e) \times \R$, immersed in $\C^n$ so that it is transversal to each $g$-sphere around $\n0$ and the $\alpha$-limit of  each orbit in $K$ of the gradient flow $\mathcal G_\F$ is the origin $\n0$. Thus, for each positive number $\e' < \e$, the leaves of $\F$ in $K$ meet the $g$-sphere 
   $\s_{\e'}$ transversally and define a real analytic foliation on it, which can have very rich dynamics. For instance \cite{CKP}, if $\F$ is defined by a linear vector field in the Poincar\'e domain with generic eigenvalues, then the foliation on $\s_{\e'}$ actually is defined by a flow which is Morse-Smale. 
   
   The picture on $ \widehat M$ is rather different: the  {\it $\alpha$-limit} of $\mathcal G_{\F}$ 
 of each  leaf in
$\widehat M$  is the set of points where the corresponding leaf meets the polar variety $M$. The intersection of the leaves of $\F$ with $M$ is always transverse, as described above, and 
     the topology of every leaf $\L \subset \widehat M$  is determined by its intersection with the boundary sphere, $\L \cap \s_\e$, and the points where
$\L$ meets $M$: each such intersection point comes with a Morse index, that tells us what type of handle we must attach to the leaf when passing through that point.
 In particular, if $\L$ is compact, then its Euler-Poincar\'e characteristic $\chi(\L)$ equals the number of intersection points in $\L \cap M$, counted with sign. The sign is negative when the corresponding Morse index is odd, and 
positive   otherwise.
   
   For instance, for generic linear actions of $\C^m$ in $\C^n$ in the Siegel domain (see \cite{Me} and Example \ref{ex: siegel} below), $m << n$, each Siegel leaf $\L$ is a copy of $\C^m$ embedded in $\C^n$ with a unique point of minimal distance to $\n0$. The gradient flow of the quadratic form $(z_1,\cdots,z_n) \mapsto \vert z_1\vert^2 + \cdots + \vert z_n\vert^2$ describes each Siegel leaf as a cylinder over its intersection with a sphere, $\L \cap \s^{2n-1} \cong \s^{2m-1} $, to which we attach a handle of Morse index 0, {\it i.e.}, a $2m$-disc.

  We remark that the topology of the polar varieties that arise in this way can be rather interesting. In fact these are all  manifolds with a canonical complex structure determined by a foliated atlas for $\F$ (see Remark \ref{r: Haefliger}), and with a rich geometry. 
  For instance, when $\F$ is the foliation of a linear flow in the Siegel domain with generic eigenvalues and $g$ is the usual metric, the manifold $M_\e = M \cap \s_\e$ is the space of Siegel leaves of the flow and it has very interesting topology \cite{CKP, LM}.  This type of manifolds has been studied by several authors giving rise to the theory of {\it LV-M manifolds} (see for instance \cite {LM-V, Me, MV}). These  belong also to an interesting class of manifolds that appear in mathematical physics and algebraic topology,  called {\it moment-angle manifolds} (see for instance \cite{GL}). On the other hand, the polar varieties associated to non-linear vector fields in $\C^n$ of the form $(\lambda_1 z_{\sigma(1)}^{a_1}, \cdots, \lambda_n z_{\sigma(n)}^{a_n})$, where the $\lambda_i$ are non-zero complex numbers, $a_i \ge 2$, and $\sigma$ is a permutation of the set $\{1,\cdots,n\}$, have been studied in \cite{RSV, Seade, Seade2} in relation with Milnor fibrations for real singularities. In all these cases the function that defines the polar variety determines a Milnor open-book \cite{Milnor:singular}. The study of this type of singularities has evolved and several related lines of research are being pursued by various authors, see for instance 
    \cite{BPS, Cis, CSS2, Oka2}.

 Notice that if $\s_\e $ is a small $g$-sphere in $\U$ around $\n0$, then 
 the intersection of the leaves of $\F$ with
$ \s_{\e}$ defines a real analytic foliation $\F_{\e}$ on the
sphere, which is singular at $M_\e  := \s_{\e} \cap M $, where $M$ is the polar variety.
 Away from $M_\e$ the leaves of $\F_{\e}$ have
real dimension $2d-1$.  In Section \ref{s: spheres}   we address the problem of studying how the topology of the foliation $\F_\e$ varies as $\e \rightarrow 0$. For instance if $M = \{\n0\}$, as it happens for linear vector fields in the Poincar\'e domain, then 
the topology of $\F_{\e}$ is independent of $\e$. This happens also for generic  flows in the Siegel domain, and more generally for generic linear $\C^m$-actions on $\C^n$ in the Siegel domain.
In \cite{GSV} it was proved that if $\F$ has dimension 1 and all the points in $M$ have Morse index 0, {\it i.e.}, they correspond to local minimal points in their leaves, then the topology of $\F_{\e}$ is independent of $\e$. Here we show that the same statement is true for foliations of arbitrary dimension (Theorem \ref  {Theorem 3}).

Finally,  Section \ref{s: codim 1} is motivated by \cite {Ito-Scardua}, where  the authors prove that
every linear foliation  $\F$ in $\C^n$ has  associated an invertible symmetric  $n \times n$ complex matrix, 
and   the set of all such matrices that define a foliation that carries the Morse structure of 
$Q= |z_1|^2 +   |z_2|^2 + \cdots + |z_n|^2 $ is  dense. 
In fact their complement is a  Zariski closed proper subset.
Here  we look at homogeneous foliations of degree $k >1$ and show that up to isotopy,  all these carry the Morse structure of the usual hermitian metric in $\C^n$.

\section{Some definitions and examples}\label{sec. definitions}

Recall that if $M$ is a smooth (= $C^\infty$) manifold and $g: M \to \R$ is a smooth function with isolated
 critical values, then $g$ is a Morse function if its critical points are all non-degenerate. Usually it is also asked that  different 
critical points correspond to distinct critical values; yet, in this article we ignore that condition.
 The {\it Morse lemma} says that in a neighbourhood of a
 critical point, one can choose a  coordinate chart such that  $g$ takes locally the
 form:
$$g(x_1,...,x_k,....,x_m) = - x_1^2 - ... -x_k^2 + x_{k+1}^2 +.... + x_m^2\,,$$
where $m$ is the dimension of $M$. The number $k$ of negative signs is called {\it the Morse index} 
of $g$ at  the critical point. Points of Morse index $0$ or $m$ correspond to local minimal and maximal  
points respectively. All others are saddle-points.

In this work  $M$ is a  neighbourhood $\U$ 
of the origin $\n0$ in $\C^n$, $n \ge
2$, and $g: \U \to \R$ is a Morse function with a unique critical point at $\n0$ of Morse
 index $0$. Its level surfaces near $\n0$ are diffeomorphic to $(2n-1)$-spheres.

The following concept  is introduced in  \cite{GSV} for 1-dimensional foliations under the name of {\it foliations of Morse-type}. See also \cite{RSV} where these are studied in relation with finite determinacy of map-germs, and
 also \cite{Ito-Scardua, Ito-Scardua2} where the concept is studied for certain codimension 1 foliations. We find that  the name ``Morse-type foliation" can be confused with ``Morse foliation" (these are real codimension 1 foliations which are locally defined by a Morse function). Thus we use a different name here.
 
Unless  stated otherwise, throughout this article $\F$ is  a holomorphic foliation   of dimension $d \ge 1$ defined
on  $\U$,  with a unique singular point at $\n0$, and $g$ is a Morse function on $\U$ with a unique critical point at $\n0$ of
 Morse index $0$. (See \cite{BB, Su} for basic material on holomorphic foliations.) 
A leaf of $\F$ means a leaf on  $U \setminus \{\n0\}$.

\begin{definition}\label{foliacion-morse}  We say that $\mathcal{F}$  \emph{has a Morse structure compatible with that of} $g$ if  the restriction $g_\L$ of $g$ to each leaf $\mathcal{L}$
of $\mathcal{F}$, is a Morse function on $\mathcal{L}$. For the sake of  ``compactness", in the above situation we will often say that $\F$ {\it carries the Morse structure} of $g$.
\end{definition}

\smallskip

 The   lemma below follows easily from \cite{Thom} and its proof is left to the reader:
\begin{lemma}\label{puntos-crit-tangentes}
 The critical points of $g_{\mathcal{L}}$ are the points where $\mathcal{L}$ is tangent to the level surfaces
of $g$.
\end{lemma}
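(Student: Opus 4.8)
The plan is to show that, for a point $p \in \L$ (so $p \neq \n0$, since a leaf lives on $\U \setminus \{\n0\}$), being a critical point of $g_\L$ and being a point of tangency of $\L$ to the level surface of $g$ through $p$ are literally the same condition on tangent spaces, translated through the differential of $g$. First I would fix such a $p$ and observe that, since $\n0$ is the only critical point of $g$ on $\U$, the differential $dg(p)$ is a nonzero linear functional on $T_p\C^n \cong \R^{2n}$. Consequently the level set $V := g^{-1}(g(p))$ is, near $p$, a smooth real hypersurface of dimension $2n-1$ whose tangent space is $T_pV = \ker dg(p)$.

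Next I would unwind the definition of a critical point of the restriction $g_\L = g|_\L$. By the chain rule the differential of $g_\L$ at $p$ is the restriction $dg(p)|_{T_p\L}$, so $p$ is a critical point of $g_\L$ precisely when $dg(p)$ annihilates $T_p\L$, that is, when $T_p\L \subseteq \ker dg(p)$. Combining this with the identification $\ker dg(p) = T_pV$ from the first step yields the inclusion $T_p\L \subseteq T_pV$.

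Finally I would match this inclusion with the notion of tangency. Since $\dim_\R \L = 2d \le 2n-2 < 2n-1 = \dim_\R V$, the leaf $\L$ is tangent to the hypersurface $V$ at $p$ exactly when its tangent space is contained in that of $V$, i.e. $T_p\L \subseteq T_pV$; this is the same condition obtained above. Hence $p$ is a critical point of $g_\L$ if and only if $\L$ is tangent to the level surface of $g$ at $p$, which is the assertion.

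The computation is entirely elementary, so there is no genuine obstacle beyond fixing conventions; the one point that deserves care is the meaning of \emph{tangent} when $\L$ has real dimension strictly smaller than the level surface $V$, where tangency must be taken to mean the inclusion $T_p\L \subseteq T_pV$ rather than equality of tangent spaces. Equivalently, one may phrase the whole argument through the gradient: $p$ is critical for $g_\L$ if and only if $\nabla g(p)$ is orthogonal to $T_p\L$, and since $\nabla g(p)$ spans the normal direction to $V$, this orthogonality is exactly the statement that $T_p\L$ lies in $T_pV$.
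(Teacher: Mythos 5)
Your argument is correct and is precisely the elementary verification the paper leaves to the reader (the paper only remarks that the lemma ``follows easily from Thom'' and gives no proof): since $\n0$ is the unique critical point of $g$, at any $p\in\mathcal{L}$ the level set of $g$ is a smooth hypersurface with tangent space $\ker dg(p)$, and criticality of $g|_{\mathcal{L}}$ at $p$ is exactly the inclusion $T_p\mathcal{L}\subseteq\ker dg(p)$, i.e.\ tangency. Your closing reformulation via orthogonality to $\nabla g(p)$ also matches the paper's own description of $M^*$ in Section~\ref{sec. definitions}, so the conventions are consistent.
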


Denote by $M = M(\F,g)$ the set of points in
$\U$ where the two foliations are tangent. Lemma \ref{puntos-crit-tangentes} says that these are exactly the critical points of the restrictions   of $g$ to the leaves of $\F$.
These are the points of
\emph{ contact} of the two foliations in the notation of Thom \cite {Thom}. We assume, by
definition, that this set contains $\n0$, since at this point both
foliations are singular and their tangent spaces are
$0$-dimensional.

\begin{definition}\label{contactos}
We call  $M := M(\mathcal{F},g)$ the \emph{polar set } of $\mathcal{F}$ \emph{with respect to the Morse function} $g$. 
\end{definition}

For each point  $\nz \in \U \setminus \{\n0\}$, let $\L_\nz$ be the leaf of $\F$ that contains $\nz$ and  $T_\nz\L_\nz$ its tangent space.  Then  $M^* := M \setminus \{\n0\}$ is:
$$M^* := \{ \nz \in \U \subset \C^n \, \big | \, {\rm Re} \langle v, \nabla g(\nz) \rangle = 0 \;, \quad \forall v \in T_\nz\L_\nz\}\,,$$
where $\nabla \,g$ is the gradient vector field and $\langle\,,\, \rangle$ is the usual hermitian product, whose real part is the usual inner product in 
$\R^{2n} \cong \C^n$.

Notice that at each point $\nz \in \L$ we have a  vector $\nabla g_\L(\nz) \in T_\nz \L _\nz$, which is the projection to  $T_\nz \L _\nz$  of the gradient vector  $\nabla g(\nz)$. In fact $\nabla g_\L(\nz) \in T_\nz \L _\nz$ is the gradient of 
 the restriction of  $g$ to the leaf $\L_\nz$. We thus have a smooth vector field $\nabla g_\F$ on $\U$, which is everywhere tangent to 
 the leaves of $\F$. 
 
 Let   ${\mathcal G_\F}$ be the (local) flow of  $\nabla g_\F$.  We call this the {\it gradient flow on the leaves of $\F$ for the   function $g$}, in analogy with \cite{GSV}. 
 This flow 
leaves invariant each  leaf of $\F$, and at each point moves along its  leaf in the direction that (travelling backwards) brings 
it ``closer" to $\n0$ with respect to the level sets of the Morse function $g$.

 The fixed points of ${\mathcal G_\F}$, others than $\n0$, are the points where $\nabla g_\L(\nz)$ is orthogonal to   $T_\nz \L_\nz$. These are exactly the points of contact of $\F$ with the fibres of $g$. Hence 
 we have:
 
 \begin{lemma} The set $M$ of points where $\F$ is tangent to the level hypersurfaces of $g$ is the  singular set (fixed points)  of the gradient flow ${\mathcal G_\F}$.  Moreover, $\F$ carries the Morse structure of $g$ if and only if the singularities of ${\mathcal G_\F}$ on each leaf are isolated and non-degenerate.
 \end{lemma}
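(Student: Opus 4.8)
The plan is to prove the two assertions separately, the first being essentially a restatement of the discussion preceding the lemma. For the first claim, I would observe that the fixed points of the flow $\mathcal{G}_\F$ are by definition the zeros of its generating vector field $\nabla g_\F$. Since at each point $\nz \neq \n0$ the vector $\nabla g_\F(\nz)$ is the orthogonal projection of $\nabla g(\nz)$ onto $T_\nz \L_\nz$ --- and this projection is precisely the gradient $\nabla g_\L(\nz)$ of the restriction $g_\L$ computed with the Riemannian metric induced on the leaf from the ambient Euclidean structure of $\R^{2n} \cong \C^n$ --- a point $\nz$ is fixed if and only if it is a critical point of $g_\L$. By Lemma \ref{puntos-crit-tangentes} these critical points are exactly the points where $\L$ is tangent to the level hypersurfaces of $g$, i.e. the points of $M^*$; adjoining $\n0$, which is a fixed point by convention, the fixed-point set equals $M$.

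For the second claim I would fix a leaf $\L$ and work intrinsically on it with its induced metric, writing $h := g_\L$ and recalling that $\nabla g_\F|_\L = \nabla h$. By Definition \ref{foliacion-morse}, $\F$ carries the Morse structure of $g$ if and only if $h$ is a Morse function on every leaf, that is, all its critical points are non-degenerate. The key step is the standard identity relating the Hessian of a function to the linearization of its gradient: at a critical point $p$ of $h$ one has
\[
\mathrm{Hess}_p\, h(X,Y) = \langle (D\nabla h)_p\, X, \, Y \rangle, \qquad X,Y \in T_p\L,
\]
where $(D\nabla h)_p$ denotes the linear part of the vector field $\nabla h$ at its zero $p$. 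Because the induced metric is non-degenerate, $\mathrm{Hess}_p\, h$ is non-degenerate if and only if $(D\nabla h)_p$ is an isomorphism, which is exactly the condition that $p$ be a non-degenerate zero of $\nabla g_\F|_\L$, i.e. a non-degenerate singularity of $\mathcal{G}_\F$ on $\L$. Since a non-degenerate zero of a vector field is automatically isolated, this equivalence yields both directions of the statement.

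The main point requiring care is the displayed identity, through which a non-degenerate critical point of $h$ is matched with a non-degenerate rest point of the gradient flow: at a zero $p$ of $\nabla h$ the contributions to $(D\nabla h)_p$ coming from the variation of the metric drop out, leaving the symmetric operator that represents $\mathrm{Hess}_p\, h$ via the metric. I would also note that the whole argument is carried out leaf by leaf on $\U \setminus \{\n0\}$, where the tangent distribution of $\F$, and hence the orthogonal projection defining $\nabla g_\F$, are smooth; thus $\nabla g_\F$ is a genuine smooth vector field and its flow $\mathcal{G}_\F$ is well defined, with $\n0$ treated as a singular point by convention.
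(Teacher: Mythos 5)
Your argument is correct and follows essentially the same route as the paper, which states this lemma as an immediate consequence of the preceding identification of $\nabla g_\F$ with the leafwise gradient of $g_\L$ (so that its zeros are the critical points of $g_\L$, i.e.\ the contact points of Lemma \ref{puntos-crit-tangentes}). Your second paragraph, matching non-degeneracy of the Hessian of $g_\L$ at a critical point with invertibility of the linearization of the gradient field via the induced metric, is precisely the standard fact the authors leave implicit.
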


\medskip

\begin{examples}\label{ex: siegel}
In \cite {CKP} the authors
study the topology of  
flows defined by  a linear, diagonal,  vector field $F$ in $\C^n$ with
eigenvalues $\Lambda = \{\l_1,...,\l_n\}$. From the viewpoint of
dynamics, maybe the most interesting case is when $F$ is in the
{\it Poincar\'e domain}, {\it i.e.}, the convex hull
$\H(\l_1,...,\l_n)$ does not contain the origin $0 \in \C$. In
this case one has that every leaf  is transverse to every sphere around the origin $\n0 \in \C^n$
and has $\n0$ in its closure. Therefore, topologically, the leaves
are all cylinders over their intersection with a small sphere
$\s_\e$ around $\n0$, {\it i.e.}, they are of the form $(\L \cap
\s_\e) \times \R$. Hence they are immersed copies of  either $\C$
or $\C^*$. This is used in \cite{CKP} to prove the structural stability of this type of foliations.

On the other hand, if  $0$ is in $\H(\l_1,...,\l_n)$ we say that $F$
is in the {\it Siegel domain}. In this case if  the eigenvalues are
``generic enough'' then by \cite {CKP}  there is
an open dense set $\widehat M$ in $\C^n$ of points that belong to
a {\it Siegel leaf}. These leaves are embedded copies of $\C$
which have a unique point of minimal distance to $\n0$.

The space $M^*$ of Siegel leaves is a real analytic submanifold of
$\C^n$ of real codimension 2  defined by $M^* = M
\setminus\{\n0\}$, where $M$ is given by the equations:
$$ \langle F(\nz),\nz \rangle := \sum_{i=1}^n \l_i |z_i|^2 = 0 \,.$$
The topology and geometry of this type of manifolds has been investigated
by Lopez de Medrano, Verjovsky and Meersseman, obtaining
remarkable results (see for instance \cite{MV} and the bibliography on it).  
Following \cite{Me}, consider now $m$ linear vector fields in
$\C^n$, $n > 2m$, $F_j = (\l_1^j z_1,...,\l_n^j z_n)$,
$j=1,...,m$. For each $k = 1,...,n$, set $\Lambda_k =
(\l_k^1,...,\l_k^m)$, so we have $n$ vectors in $\C^m$. Assume
these vector   satisfy:

a) The {\it Siegel condition}: $\n0 \in
\H(\Lambda_1,...,\Lambda_n)$. That is, the origin $\n0 \in \C^m$ is
in the convex hull of the $n$ points in $\C^m$ determined by the
$\Lambda_i$; and

b) The {\it weak hyperbolicity condition}: no hyperplane through
any $2m$ of these $n$ points contains $\n0$.

Meersseman shows that the corresponding linear action of
$\C^m$ on $\C^n$ has an open dense set of {\it Siegel leaves}:
copies of $\C^m$ embedded with a unique point of minimal distance.
These points of minimal distance parameterize the space of Siegel
leaves, which is a smooth real submanifold of $\C^n$ of real
codimension $2m$, with a rich geometry and topology.
\end{examples}

\medskip
These are all examples of foliations that carry the Morse structure of the  function on $\C^n$ given by $(z_1,\cdots,z_n) \buildrel{Q}\over{\mapsto} |z_1|^2 + \cdots |z_n|^2$. 
\medskip

Recall that every point in $M$ is a critical point of the restriction 
$g_\L$ of $g$ to the corresponding leaf.

\begin{definition}  
Assume that a point  $\nz \in M^* := M \setminus \{\n0\}$ is a non degenerate contact. Then
define its {\it Morse index relative to $\F$} to be the
Morse index at $\nz$ of the restriction  of $g$ to the leaf   that contains $\nz$.
\end{definition}

Since the leaves have real dimension $2d$, {\it a priori} the
possible Morse indices go from $0$ to $2d$. However, not all these
indices are actually possible, because the fact that the foliation
is holomorphic imposes certain restrictions. In fact 
just as in \cite {Milnor:singular} or in \cite{Andreotti-Frankel}, it can be shown that 
there are no critical points  in the leaves of index greater than $d$.

\begin{examples}\label{pham-brieskorn}
{\bf i)}  Given integers $p,q > 2$,   let $F$
be the vector field defined by
$$F(z_1,z_2) = (qz_2^{q-1}, pz_1^{p-1})\,.$$
Its integral lines are the level curves of the  map
$(z_1,z_2) \buildrel{f} \over {\mapsto} (z_1^p - z_2^q)$.
In these examples the above  function $Q$  is
 Morse  restricted to each leaf. There is
one separatrix, $V:= \{z_1^p = z_2^q\}$, which is transversal to all
the spheres around $\n0$, and all other fibres meet the $z_1$-axis at
$p$ points and they have $q$  points on the $z_2$-axis.  A straightforward computation shows that these are all points with Morse index $0$. There is a third component of the polar variety $M$, which is a real analytic surface with an isolated singularity at $\n0$:
 $$M^-:= \{(z_1,z_2) \in \C^2 \,  {\big |} \, qz_2^{q-1} \oz_1 = - pz_1^{p-1} \oz_2 \,;\,z_1,z_2
\ne 0\}\,.$$ 
Each leaf has   $pq$
saddles in this surface. As a matter of fact, one can easily check that $M^-$ is ambient homeomorphic to the separatrix $V$. That is, there is a homeomorphism of $\C^2$ carrying $M^-$ into $V$. We do not know whether this is a coincidence or a special case of a general theorem.

	\medskip 
	\noindent
{\bf ii)}  Consider now  the vector field
$F(z_1,z_2) = (qz_2^{q-1}, 2z_1)\,, q > 2.$
Asides from the
separatrix $V = \{z_1^2 = z_2^q\}$, each leaf sufficiently near
$V$ has 2 points of minimal distance in the $z_1$-axis and $q$
saddles in the $z_2$-axis. When $\vert t \vert =  \left(\frac{2}{q} \right)^{\frac{q}{q-2}}$
the  
leaves  $f^{-1}(t)$ have degenerate contacts. This happens because a new component of $M$ appears, this is the  singular variety 
$$M^-= \{qz_2^{q-1} \bar z_1 + z_1
\bar z_2 = 0; z_1,z_2 \neq 0\}\,,$$ which does not pass through $\n0$, so we do not see it when we are near 
the origin. 
 As we consider leaves
which are further away from $V$, each leaf has $2$ minimal points
in the $z_1$-axis, as before, but they now have $q$ minimal points
in the $z_2$-axis and new singularities appear: each leaf has 2q
saddles along the surface $M^-$. Thence we see that even though all fibres $f^{-1}(t)$ are diffeomorphic for $t \ne 0$, yet, the way how they are embedded in $\C^2$ bifurcates as we pass from fibers inside the Milnor tube of radius 
$\left(\frac{2}{q} \right)^{\frac{q}{q-2}}$, to fibers outside this tube.

\medskip
\noindent
{\bf iii)}
For the linear vector field $(-z_2,z_1)$, whose integral
lines are the fibers of the map $z_1^2+ z_2^2$, 
the polar variety is a set of real dimension three
and each leaf $\mathcal{L}= f^{-1}(t)$ has a curve  of critical points, all of them 
degenerated contacts with a sphere. 
Yet,
 in \cite{Ito-Scardua} is proved that every linear foliation can be arbitrarily approximated by  linear foliations having only non-degenerate contacts (see  Section \ref{s: codim 1} below).

We remark that the same type of observations done above for  foliations defined by a polynomial $f =  z_1^2 - z_2^q$, $q > 2$, apply also in higher dimensions to all foliations defined by Pham-Brieskorn polynomials of the form:
$$f(z_1,\cdots,z_n) =  z_1^2 +  z_2^{a_2} + \cdots +  z_n^{a_n}\,, \quad  a_i > 2\,.$$
This was first observed in \cite{Ito-Scardua2}  for certain cases.

\end{examples}

\section{Generic contacts and smoothness of the polar variety}\label{section:teoremas}

 Our first result is the analogous in our setting of a classical result for polar varieties (compare with \cite{Le, LT}):
   
   \begin{theorem}\label{Theorem 1}
 Let $\mathcal{F}$ and  $g:\U \to \R$ be as above, $\U \subset \C^n$.  Let $  M$ be the polar variety of $\F$ relative to $g$ and $M^*:=  M \setminus \{\n0\}$. Then 
 $\mathcal{F}$ carries the Morse structure of $g$ if and only if  $ M^*$ is a smooth, reduced
 submanifold of $\mathbb{C}^n$ of codimension $2d$ and $M^*$ is everywhere transversal to $\mathcal{F}$. 
   \end{theorem}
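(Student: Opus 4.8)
\emph{Plan.} All three conclusions — smoothness of $M^*$ with the correct codimension, reducedness, and transversality to $\F$ — are local properties along $M^*$ away from $\n0$, as is the Morse condition, so the plan is to fix a point $\nz_0 \in M^* = M \setminus \{\n0\}$ and work in a foliation chart. Since $\F$ is nonsingular on $\U \setminus \{\n0\}$, I choose holomorphic coordinates $(u_1,\dots,u_n)$ near $\nz_0$ in which the leaves are the fibres $\{u_{d+1} = c_{d+1},\dots,u_n = c_n\}$; write $x = (u_1,\dots,u_d)$ for the leaf directions and $y = (u_{d+1},\dots,u_n)$ for the transverse ones. Because $g$ is real valued, Lemma \ref{puntos-crit-tangentes} together with the Wirtinger calculus identifies the critical points of $g_\L$ with the common zeros of the $d$ complex-valued functions
$$h_j := \frac{\partial g}{\partial u_j}\,, \qquad j = 1,\dots,d\,,$$
the conjugate conditions $\partial g/\partial \bar u_j = 0$ being automatic. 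Thus locally $M^* = H^{-1}(\n0)$ for the map $H := (h_1,\dots,h_d)$ valued in $\C^d \cong \R^{2d}$, a system of $2d$ real equations.

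The heart of the argument is to read off everything from the real differential $DH$, viewed as a $(2d)\times(2n)$ real matrix split into a leaf block $D_x H$ (size $2d \times 2d$) and a transverse block $D_y H$. The first step is to identify $D_x H$, up to the standard real-versus-Wirtinger bookkeeping, with the real Hessian of $g_\L$ at $\nz_0$: since $h_j = \partial g/\partial u_j$, differentiating again in the leaf directions produces exactly the second-order data $\partial^2 g/\partial u_i\partial u_j$ and $\partial^2 g/\partial \bar u_i\partial u_j$ that assemble the Hessian of $g_\L$ in the basis dual to $(x,\bar x)$. Consequently $g_\L$ is Morse at $\nz_0$ if and only if $D_x H$ is nondegenerate, i.e. $\operatorname{rank}_\R D_x H = 2d$.

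The equivalence then follows by linear algebra. If $\F$ carries the Morse structure of $g$, then $\operatorname{rank}_\R D_x H = 2d$ at every point of $M^*$; as $D_x H$ is a maximal square block of $DH$, which has only $2d$ rows, this forces $\operatorname{rank}_\R DH = 2d$, so $M^*$ is a smooth reduced submanifold of real codimension $2d$ with $T_\nz M^* = \ker DH$. Transversality is immediate: a vector of $T_\nz M^* \cap T_\nz\L$ has the form $(\dot x,0)$ with $D_x H\,\dot x = 0$, hence $\dot x = 0$ by nondegeneracy, so $T_\nz M^* \cap T_\nz\L = \{0\}$ and, the dimensions adding to $2n$, $M^*$ is transversal to $\F$. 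Conversely, if $M^*$ is smooth, reduced and of codimension $2d$, then reducedness is precisely the statement that the defining map has maximal rank, $\operatorname{rank}_\R DH = 2d$ with $T_\nz M^* = \ker DH$; transversality $T_\nz M^* \cap T_\nz\L = \{0\}$ then says that $D_x H\,\dot x = 0$ forces $\dot x = 0$, i.e. $\ker D_x H = \{0\}$, so $D_x H$ is nondegenerate and $g_\L$ is Morse.

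The step I expect to require the most care is the identification in the second paragraph: translating nondegeneracy of the real Hessian of $g_\L$ into invertibility of the leaf block $D_x H$ through the Wirtinger derivatives, and verifying that reducedness of the real-analytic set $M^* = H^{-1}(\n0)$ coincides with $DH$ having maximal rank $2d$ along $M^*$ (rather than the defining functions vanishing to higher order on a smooth set). One should also check that these chart-wise computations are independent of the chosen foliation chart, which holds because the zero locus, the codimension, and the Morse index are intrinsic while only the matrix $DH$ itself is chart-dependent.
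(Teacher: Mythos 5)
Your proposal is correct and follows essentially the same route as the paper's proof (Lemmas \ref{M-subvariedad} and \ref{M-subvariedad=Morse}): work in a foliated chart, observe that the leaf-direction block of the Jacobian of the local defining equations of $M$ is precisely the Hessian of $g_\L$ in Wirtinger coordinates, and then run the linear algebra in both directions. The only cosmetic difference is that you use the $d$ complex equations $\partial g/\partial u_j=0$ (noting the conjugates are automatic) where the paper lists the $2d$ conjugate-paired functions $G_{j,0},G_{0,j}$; this changes nothing of substance.
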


The following lemma proves one side of Theorem \ref{Theorem 1}:

\begin{lemma}\label{M-subvariedad}
 If $\mathcal{F}$ carries the Morse structure of $g$, then $M^*$ is a smooth, reduced, real
 submanifold of $\mathbb{C}^n$ of codimension $2d$ and $M^*$ is transversal to $\mathcal{F}$  everywhere.
\end{lemma}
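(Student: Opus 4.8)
The plan is to reduce every assertion to a local computation in a foliated chart and then invoke the implicit function theorem, since being a smooth submanifold, having a given codimension, and being transversal to $\F$ are all local conditions. I would fix a point $\nz_0 \in M^*$ and, using that $\F$ is a nonsingular holomorphic foliation of dimension $d$ on $\U \setminus \{\n0\}$, choose holomorphic coordinates $(u,w) = (u_1,\dots,u_d,w_1,\dots,w_{n-d})$ on a neighbourhood $W$ of $\nz_0$ in which $\F$ is the trivial foliation with leaves $\{w = \mathrm{const}\}$. Writing $\xi \in \R^{2d}$ for the underlying real coordinates of $u$ and $c$ for the (fixed) value of $w$ on a leaf, the restriction $g_c(\xi) := g(\xi,c)$ is, for each $c$, a smooth function on the leaf $\L_c = \{w=c\}$.

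Next I would encode $M^*$ as a zero set. By Lemma \ref{puntos-crit-tangentes} the contact points on $\L_c$ are exactly the critical points of $g_c$, and the geometric condition $\mathrm{Re}\langle v,\nabla g(\nz)\rangle = 0$ for all $v \in T_\nz\L$ is precisely the vanishing of the leaf-differential $d g_c$. Setting
\[
\Psi(\xi,c) := \Big( \tfrac{\partial g}{\partial \xi_1}(\xi,c), \dots, \tfrac{\partial g}{\partial \xi_{2d}}(\xi,c)\Big) \in \R^{2d},
\]
we thus have $M^* \cap W = \Psi^{-1}(\n0)$. The crucial observation is that the $\xi$-block of the Jacobian of $\Psi$ at a point of $\Psi^{-1}(\n0)$ is exactly the Hessian matrix of $g_c$ at that critical point. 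The hypothesis that $\F$ carries the Morse structure of $g$ says precisely that every such critical point is non-degenerate, i.e. this $2d \times 2d$ block is invertible.

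I would then read off all three conclusions from this single fact. Invertibility of the $\xi$-block shows the full Jacobian $D\Psi$ has rank $2d$ along $\Psi^{-1}(\n0)$, so $\n0$ is a regular value and the implicit function theorem gives that $M^* \cap W$ is a smooth real submanifold of codimension $2d$; regularity of the value also shows $M^*$ is cut out with multiplicity one, i.e. reduced. For transversality, the leaf tangent space $T_\nz\L_c$ is the $\xi$-subspace, and $D\Psi$ restricted to it is the invertible Hessian, hence already surjective onto $\R^{2d}$; therefore $T_\nz M^* + T_\nz\L_c = \ker D\Psi + T_\nz\L_c = \R^{2n}$, which is the transversality of $M^*$ to $\F$. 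Covering $\U \setminus \{\n0\}$ by such charts then completes the proof.

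The main obstacle is conceptual bookkeeping rather than any hard estimate: one must check that the notions involved are genuinely chart-independent, so that the local picture glues. Vanishing of the differential along the leaf is the intrinsic geometric condition defining $M^*$, and the Hessian at a critical point is a well-defined bilinear form, so non-degeneracy does not depend on the foliated chart. One should also note that the leaf-gradient $\nabla g_\L$ used elsewhere in the paper and the leaf-differential $d g_c$ have the same zero set with equivalent non-degeneracy, since their linearizations at a common zero differ only by the (invertible) hermitian metric. Once these compatibilities are in place, the entire argument is simply the implicit function theorem applied to $\Psi$.
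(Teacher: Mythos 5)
Your proposal is correct and follows essentially the same route as the paper's proof: a foliated chart in which the leaves are the fibres of the transverse coordinates, the observation that $M^*$ is locally the zero set of the leafwise differential of $g$, the identification of the leaf-block of the Jacobian of that map with the Hessian of $g$ restricted to the leaf (invertible by the Morse hypothesis), and then the regular-value/implicit-function-theorem conclusion for smoothness, reducedness, and codimension, with transversality read off from the surjectivity of the Jacobian restricted to the leaf tangent space. The only difference is cosmetic: the paper works in the conjugate coordinates $(z_j,\overline{z}_j)$ rather than underlying real coordinates, and your added remarks on chart-independence and on the equivalence of the leaf-gradient and leaf-differential formulations are sound but not needed beyond what the paper already implicitly uses.
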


\begin{proof}
Let $\nz \neq \n0$ in $\U$ and consider  a foliated chart  $ U'$  around $\nz$  
with holomorphic coordinates  $(z_1, z_2, \dots, z_n)$,  such that the leaves  of $\F$ are locally given by:
$$ \big\{z_{d+1} = c_{d+1}, \dots, 
 z_{n} = c_n \big\}  , $$
where the $c_i$ are constants.
Thus we can consider $g_{\mathcal{L}_\nz}$ in $U'$ as a function of $(z_1,  \dots, z_d)$, or equivalently
of the real coordinates $(z_1, \overline{z}_1, \dots, z_d, \overline{z}_d)$.
Then the Hessian of  $g_{\mathcal{L}_\nz}$ at $\nz$ looks like
\begin{displaymath}
H (\nz):= Hess (g_{\mathcal{L}_\nz})(\nz)= \left(\begin{array}{ccccc}
\frac{\partial^2g_{\mathcal{L}_\nz} }{\partial z_1\partial z_1} 
& \frac{\partial^2 g_{\mathcal{L}_\nz}}{\partial \overline{z}_1 \partial z_1} &
\dots &  \frac{\partial^2g_{\mathcal{L}_\nz} }{\partial z_d \partial z_1} 
& \frac{\partial^2g_{\mathcal{L}_\nz} }{\partial \overline{z}_d \partial z_1}\\
\frac{\partial^2g_{\mathcal{L}_\nz}}{\partial z_1 \partial \overline{z}_1 } 
& \frac{\partial^2g_{\mathcal{L}_\nz} }{\partial \overline{z}_1 \partial \overline{z}_1 }&
\dots &  \frac{\partial^2g_{\mathcal{L}_\nz} }{\partial z_d \partial \overline{z}_1 } 
& \frac{\partial^2g_{\mathcal{L}_\nz} }{\partial \overline{z}_d \partial \overline{z}_1 }\\
\vdots &  \vdots && \vdots &  \vdots \\
\frac{\partial^2g_{\mathcal{L}_\nz} }{\partial z_1 \partial z_d} 
& \frac{\partial^2g_{\mathcal{L}_\nz} }{\partial \overline{z}_1\partial z_d }&
\dots &  \frac{\partial^2g_{\mathcal{L}_\nz} }{\partial z_d \partial z_d} 
& \frac{\partial^2g_{\mathcal{L}_\nz} }{\partial \overline{z}_d \partial z_d}\\
\frac{\partial^2g_{\mathcal{L}_\nz} }{\partial z_1\partial \overline{z}_d }
& \frac{\partial^2g_{\mathcal{L}_\nz} }{\partial \overline{z}_1 \partial \overline{z}_d} &
\dots &  \frac{\partial^2g_{\mathcal{L}_\nz} }{\partial z_d \partial \overline{z}_d} 
& \frac{\partial^2g_{\mathcal{L}_\nz} }{\partial \overline{z}_d \partial \overline{z}_d}
                                 \end{array} \right)
(\nz)
\end{displaymath}
Define $G_{j,0}(\nz):= \frac{\partial}{\partial z_j}g_{\mathcal{L}_\nz}(\nz)$ and
 $G_{0,j}(\nz):= \frac{\partial}{\partial \overline{z}_j}g_{\mathcal{L}_\nz}(\nz)$  for $1 \leq j \leq d$. 
Set 
$$G(\nz)= (G_{1,0}(\nz), G_{0,1}(\nz), \dots 
G_{d,0}(\nz), G_{0,d}(\nz))\,.$$
Then  $M$ is locally the zero set of the functions 
  $\{G_{j,0}(\nz),G_{0,j}(\nz)\}_{1 \leq j \leq d}$ and
 their jacobian  is
\begin{displaymath}
D_{\nz} G = \left(\begin{array}{cccccccc}
\frac{\partial G_{1,0}}{\partial z_1} & \frac{\partial G_{1,0}}{\partial \overline{z}_1}&
\dots &  \frac{\partial G_{1,0}}{\partial z_d} & \frac{\partial G_{1,0}}{\partial \overline{z}_d}& 
\frac{\partial G_{1,0}}{\partial z_{d+1}} & \dots & \frac{\partial G_{1,0}}{\partial \overline{z}_{n}}\\
\frac{\partial G_{0,1}}{\partial z_1} & \frac{\partial G_{0,1}}{\partial \overline{z}_1}&
\dots &  \frac{\partial G_{0,1}}{\partial z_d} & \frac{\partial G_{0,1}}{\partial \overline{z}_d}&
 \frac{\partial G_{0,1}}{\partial z_{d+1}} & \dots & \frac{\partial G_{0,1}}{\partial \overline{z}_{n}}\\
\vdots &  \vdots && \vdots &  \vdots &\vdots & & \vdots\\
\frac{\partial G_{d,0}}{\partial z_1} & \frac{\partial G_{d,0}}{\partial \overline{z}_1}&
\dots &  \frac{\partial G_{d,0}}{\partial z_d} & \frac{\partial G_{d,0}}{\partial \overline{z}_d}& 
\frac{\partial G_{d,0}}{\partial z_{d+1}} & \dots & \frac{\partial G_{d,0}}{\partial \overline{z}_{n}}\\
\frac{\partial G_{0,d}}{\partial z_1} & \frac{\partial G_{0,d}}{\partial \overline{z}_1}&
\dots &  \frac{\partial G_{0,d}}{\partial z_d} & \frac{\partial G_{0,d}}{\partial \overline{z}_d}& 
\frac{\partial G_{0,d}}{\partial z_{d+1}} & \dots & \frac{\partial G_{0,d}}{\partial \overline{z}_{n}}
                                 \end{array} \right)
\end{displaymath}
Observe that the submatrix formed by the first $2d$ columns of $D_{\nz} G (\nz)$
is  the matrix  $H (\nz)$. Since  $\mathcal{F}$ carries the Morse structure of $g$, 
 $D_{\nz} G (\nz)$ has maximal rank $2d$; therefore $M^*$ is defined locally as the inverse image of a regular value of  
the functions $\{G_{j,0}(\nz),G_{0,j}(\nz)\}_{1 \leq j \leq d}$. Hence it is
a reduced real  submanifold of codimension $2d$.

On the other hand, 
consider the vectors $v_j= (0,\dots , 0 , 1, 0, \dots, 0)$ 
which correspond to the canonical basis of $\mathbb{R}^{2n}$ with respect to  the coordinate system $(z_1, \overline{z}_1
,\dots ,z_n, \overline{z}_n )$. The set $\{v_j\}_{1\leq j\leq 2d}$ forms a basis of the space
 $T_{\nz} \mathcal{L}_\nz$ for all $\nz$ in $M^* \cap U'$. 
If we apply  $ D_{\nz}G $ to the vector $v_j$  we  see it  equals   the $j$th column of the matrix 
$ D_{\nz}G $. Since  $D_{\nz}G$ has rank $2d$ we have that
the tangent space $T_{\nz}\mathcal{L}_\nz$ is a direct sum complement of the kernel of $D_{\nz}G$.
Moreover the intersection between $T_{\nz}\mathcal{L}_\nz$ and $ker (D_{\nz}G)$ is trivial. Hence 
$T_{\nz}\mathcal{L}_\nz$ and $M^*$ have a transversal intersection.
\end{proof}

\medskip

The lemma below completes the proof of Theorem \ref{Theorem 1}:

\begin{lemma}\label{M-subvariedad=Morse}
 Let $M^*$ be a real   submanifold of $\mathbb{C}^n$  of codimension $2d$, reduced, and assume $M^*$ is 
transversal to $\mathcal{F}$ everywhere. Then $\mathcal{F}$ carries the Morse structure of $g$.
\end{lemma}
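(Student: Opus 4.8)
The plan is to run the argument of Lemma \ref{M-subvariedad} in reverse, working in the same foliated chart $U'$ around a point $\nz \in M^*$ and keeping the notation $G$, $D_\nz G$ and $H(\nz)$. Recall that locally $M^* = G^{-1}(0)$, that the first $2d$ columns of the $2d \times 2n$ Jacobian $D_\nz G$ coincide with the Hessian $H(\nz)$ of $g_{\mathcal{L}_\nz}$, and that $g_{\mathcal{L}_\nz}$ has a non-degenerate critical point at $\nz$ precisely when $H(\nz)$ is non-singular. Since applying $D_\nz G$ to the basis $v_1,\dots,v_{2d}$ of $T_\nz \mathcal{L}_\nz$ reproduces those first $2d$ columns (as computed in Lemma \ref{M-subvariedad}), non-degeneracy of the contact at $\nz$ is equivalent to $D_\nz G$ being injective on $T_\nz\mathcal{L}_\nz$, i.e. to $T_\nz\mathcal{L}_\nz \cap \ker D_\nz G = \{0\}$. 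So the whole lemma reduces to producing this transversal splitting.

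First I would show that $\ker D_\nz G = T_\nz M^*$. One inclusion is automatic: since $G$ vanishes identically on $M^*$, its differential kills every vector tangent to $M^*$, so $T_\nz M^* \subseteq \ker D_\nz G$. For the reverse inclusion I would use that $M^*$ is a \emph{reduced} submanifold of codimension $2d$: reducedness means that the $2d$ components $\{G_{j,0},G_{0,j}\}$ generate the full ideal of functions vanishing on $M^*$. Because $M^*$ is smooth of codimension $2d$, that ideal is generated by $2d$ functions with linearly independent differentials at $\nz$; expressing these generators and the components of $G$ in terms of one another and differentiating along $M^*$ shows that the differentials $d_\nz G_{j,0}, d_\nz G_{0,j}$ span the same $2d$-dimensional annihilator of $T_\nz M^*$, hence are themselves independent. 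Thus $\operatorname{rank} D_\nz G = 2d$ and, by the dimension count $\dim \ker D_\nz G = 2n - 2d = \dim M^*$, the inclusion above is an equality $\ker D_\nz G = T_\nz M^*$.

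With this identification the conclusion is immediate. Transversality of $M^*$ and $\F$ gives $T_\nz M^* + T_\nz\mathcal{L}_\nz = \R^{2n}$, and since the dimensions add up exactly, $(2n-2d) + 2d = 2n$, this forces $T_\nz M^* \cap T_\nz\mathcal{L}_\nz = \{0\}$. Substituting $\ker D_\nz G$ for $T_\nz M^*$ yields $T_\nz\mathcal{L}_\nz \cap \ker D_\nz G = \{0\}$, so $H(\nz)$ is non-singular and the contact at $\nz$ is non-degenerate. The same transversality, realising $M^* \cap \mathcal{L}_\nz$ as a $0$-dimensional intersection of $M^*$ with the $2d$-dimensional leaf, shows these contacts are isolated along each leaf. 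Since $\nz \in M^*$ was arbitrary, the restriction of $g$ to every leaf has only isolated non-degenerate critical points, i.e. it is a Morse function, and therefore $\F$ carries the Morse structure of $g$.

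The step I expect to be the main obstacle is the identification $\ker D_\nz G = T_\nz M^*$, equivalently $\operatorname{rank} D_\nz G = 2d$. This is exactly where the hypothesis ``reduced'' is indispensable: without it one could have $M^*$ smooth of the correct codimension while $G$ cuts it out with multiplicity (the one-variable model $G(x) = x^2$, whose zero set is the harmless-looking point $\{0\}$ but with $DG = 0$ there, shows what goes wrong), and then $H(\nz)$ could be singular even though the geometric transversality picture persists. I would therefore take care that the real-analytic notion of reducedness used here really does guarantee that the given $2d$ defining functions generate the ideal of $M^*$, and not merely that $M^*$ carries no nilpotent or embedded structure as a point set.
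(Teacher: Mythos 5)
Your argument is essentially the paper's own proof: both work in the same foliated chart, use reducedness to get $\operatorname{rank} D_\nz G = 2d$ (hence $\ker D_\nz G = T_\nz M^*$), invoke transversality with the complementary-dimension count to get $T_\nz\mathcal{L}_\nz \cap \ker D_\nz G = \{0\}$, and conclude that $H(\nz)$, being $D_\nz G$ restricted to $T_\nz\mathcal{L}_\nz$, is non-singular. Your version merely spells out more explicitly why reducedness yields the rank condition and why the critical points are isolated, both of which the paper leaves implicit.
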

\begin{proof}

Let us take $(z_1, \overline{z}_1,\dots,z_n, \overline{z}_n )$ and  $G$ as in the proof of the previous lemma.
Recall $M = G^{-1}(0)$.
By hypothesis,  $M^*$ is a  reduced, real submanifold of dimension $2(n-d)$ in $\R^{2n}$, hence $D_{\nz} G$
 has rank $2d$.

Assume $\nv_0$ in  $\R^{2d}$ is different from zero and define $\nw_0 := (\nv_0, \n0)$ in $\R^{2n}$ for an appropriate coordinate
 chart as above. Then, as before,  $\nw_0$   is an element of $T_{\nz}\mathcal{L}_\nz$
and the matrices $D_{\nz}G$ and  $H(\nz)$ are related by $ D_{\nz}G = \left( H(\nz) , A\right)$, where $A$ is 
a real matrix of $2d \times 2(n-d)$; thus, $ D_{\nz}G (\nw_0) = H(\nz)(\nv_0)$.
By hypothesis,  the spaces $\L_\nz$ and  $M^*$ have a transversal intersection. Thus  the vector $\nw_0$ is not in 
the kernel of $D_{\nz}G$,  for dimensional reasons.
 Hence $H(\nz)(\nv_0)$ is different from zero. 
Since this happens for all $\nv_0$ not equal to 
zero in  $\R^{2d}$, we have that  $H(\nz)$ is non singular. Therefore $g_{\L_\nz}$ is non-degenerate.
\end{proof}

\begin{remark} 
Usually, when we speak of ``submanifolds" in differential topology we mean ``reduced submanifolds". Yet, the example below shows that this can be misleading and one must take care of the algebra behind the screen.
This is why  we added the word ``reduced" in the statement of Theorem \ref{Theorem 1}.

Consider the foliation on $\R^2$ given by the fibers of the map $(x,y) \mapsto x^4 + y$. Its leaves look like parabolas, but not quite. Consider now the contacts of this foliation with the foliation given by the lines $\{y = \rm constant\}$. Then the polar variety  is 
$$M= \{(x,y) \in \R^2 \, \big | \, x^3 = 0 \} \,.$$
So  this is just the $y$-axis, as a set,  which  of course is a smooth submanifold of $\R^2$. Yet, in this case the $y$-axis comes with multiplicity 3; it is the preimage of the critical value $0$ of the map $x^3$ in $\R^2$,  and the contacts are all degenerate.
\end{remark}

\begin{remark}\label{r: Haefliger} In \cite{Haefliger} Haefliger noticed that if $X$ is a complex manifold, 
of dimension say $n$,  $ \xi$ is a holomorphic regular foliation ({\it i.e.}, with no singularities) of dimension
$d$, $1 \le d < n$, and $N$ is a smooth differentiable submanifold of $X$ of real dimension $2n-2d$, which is everywhere 
transversal to $\xi$, then a foliated atlas for $\xi$ determines a canonical complex structure for $M$. This comes from the 
fact that a holomorphic foliated atlas for $\xi$ can be given by a locally finite collection $(U_\alpha, \Sigma_\alpha, p_\alpha)$ 
satisfying certain compatibility conditions, where the $U_\alpha$ form an open covering, the $\Sigma_\alpha$ are discs of dimension $2n-2d$ in $X$, transversal to $\xi$, 
and the $h_\alpha$ are local submersions $U_\alpha \to \Sigma_\alpha$ whose fibres give the plaques of $\xi$. Since the manifold $N$ is transversal to $\xi$,
 we can take the discs $\Sigma_\alpha$ as forming an atlas for $N$, and therefore the transversally holomorphic structure of $\xi$ determines a holomorphic structure for $N$.
Hence if $\F$ is as above and it carries the Morse structure of $g$, then the polar variety $M^*$ inherits from $\F$
a canonical complex structure.
\end{remark}

\begin{remark} Notice that the previous discussion holds with minor modifications 
for Morse functions on $\U$ with arbitrary Morse index at $\n0$. We have restricted the discussion 
to functions with Morse index $0$ because their level surfaces are spheres that bound a compact region 
around $\n0$, and we are interested in studying the topological and geometric behaviour of  holomorphic 
foliations near $\n0$.
\end{remark}

\begin{example}\label{ej:campo-Pham}
 
Consider the vector field $F(\nz) = (\lambda_1 z_{\sigma(1)}^{a_{\sigma(1)}},
\dots, \lambda_n z_{\sigma(n)}^{a_{\sigma(n)}})$, where $\sigma$ is a permutation of $\{1, \dots, n\}$ and $a_k\geq 2$ for all $k$.
Let $\F$ be the foliation defined by  $F$ and $Q(\nz)= |z_1|^2 + |z_2|^2 + \dots + |z_n|^2$. The corresponding
  polar variety $M= M(\F, Q)$ is defined by the function:
  $$f(z_1,\cdots,z_n) = \lambda_1 z_{\sigma(1)}^{a_{\sigma(1)}} \, \bar z_1 + \cdots +  \lambda_n z_{\sigma(n)}^{a_{\sigma(n)}} \, \bar z_n \,. $$
 In \cite{Seade}   is proved that this function has $\n0$ as its only critical point, so  
  $M$ is a real submanifold of $\mathbb{R}^{2n}$ of codimension
2. These singularities are also studied in \cite{RSV, Oka3}, where it is proved that if the permutation $\sigma$ is the identity, then the function $f$ is smoothly equivalent to a Pham-Brieskorn singularity. Hence in \cite{Seade2} these singularities are called {\it twisted Pham-Brieskorn singularities}.

\medskip

Now we study the transversality  between $\F$ and $M$ in special cases.

\medskip
\noindent {\bf Case  i) } Let $n=2$ and $F(\nz)= (\lambda_1 z_1^{a_1}, \lambda_2 z_2^{a_2})$. 
Then  $$M=\{(z_1, z_2) \in \mathbb{C}^2 |
h(\nz):= \lambda_1 z_1^{a_1} \oz_1 + \lambda_2 z_2^{a_2}\oz_2 =0\}\,,$$ so $M$ is defined as the zero set  of  the real functions 
$\psi(\nz):= Re (h(\nz))$ and $\phi(\nz):= Im (h(\nz))$. Notice that a  vector $v(\nz)$ belongs to $T_\nz M$ if and only if
$v(\nz)$ is orthogonal to the gradient of both $\psi(\nz)$ and $\phi(\nz)$. On the other hand 
$\{F(\nz), i F(\nz)\}$ is a basis of the tangent space of the leaf of $\F$ containing $\nz$.

Set  $\overline \nabla \psi(\nz):= \left(\frac{\partial \psi}{\partial \oz_1}, \dots, \frac{\partial \psi}{\partial \oz_n}\right)$ 
and consider   the gradient vector field   of $\psi(\nz)$.   
One has:  
\begin{eqnarray}\label{eq:producto}
 2 \left< F(\nz), \overline \nabla \psi(\nz) \right>_\mathbb{C} = \left< F(\nz), grad \, \psi(\nz) \right>_\mathbb{R}
- i \left<i F(\nz), grad \, \psi(\nz) \right>_\mathbb{R} \,.
\end{eqnarray}

Let us set:
$$A = |\lambda_1|^2 |z_1|^{2 a_1} + |\lambda_2|^2 |z_2|^{2a_2}\quad, \quad 
B =a_1 \lambda_1^2 z_1^{2 a_1 -1} \oz_1 +  a_2 \lambda_2^2 z_2^{2 a_2 -1} \oz_2 \,.$$
Then, we have:
$$\left< F(\nz), grad \, \psi \right>_\mathbb{R} = 2\,A + 2\,Re \,B \quad, \quad \left< iF(\nz), grad \, 
\psi \right>_{\mathbb{R}}= 2\,Im \,  B\, , $$
$$\left< F(\nz), grad \, \phi \right>_{\mathbb{R}}=- 2\,Im \,  B\quad \hbox{and} \quad
 \left< iF(\nz), grad \,\phi \right>_{\mathbb{R}}= -2\,A + 2\,Re \,  B \,.$$
 Let $\theta_k$ and $\theta_{\lambda_k}$ be the arguments of $z_k$ and $\lambda_k$ respectively.
The equations for $M$ can be written as:
$$|\lambda_1||z_1|^{a_1+1} = |\lambda_2||z_2|^{a_2 +1} \quad \hbox{and} \quad
e^{i \theta_{\lambda_1}}e^{i(a_1-1)\theta_1}= -e^{i\theta_{\lambda_2}} e^{i(a_2-1)\theta_2}\,.$$ Then:
\begin{eqnarray*}
B&=& a_1 |\lambda_1|^2|z_1|^{2 a_1} e^{i 2\theta_{\lambda_1}}e^{i(2a_1 -2)\theta_1} + a_2 |\lambda_2|^2 |z_2|^{2a_2}
e^{i2 \theta_{\lambda_2}} e^{i(2a_2 -2)\theta_2}\\
&=& (a_1 |\lambda_1|^2|z_1|^{2 a_1} +a_2 |\lambda_2|^2 |z_2|^{2a_2}) e^{i2\theta_{\lambda_2}} e^{i(2a_2 -2)\theta_2} �.
\end{eqnarray*}
Assume now that $F(\nz)$ is orthogonal to $grad\, \phi$. Then
$$B= \pm (a_1 |\lambda_1|^2|z_1|^{2 a_1} +a_2 |\lambda_2|^2 |z_2|^{2a_2})\,.$$ Hence 
$$\left< F(\nz), grad  \,  \psi \right>_\mathbb{R} = 2(1 \pm a_1) |\lambda_1|^2|z_1|^{2 a_1}  + 2(1 \pm a_2)|\lambda_2|^2 |z_2|^{2a_2}\,.$$
Thus  $F(\nz)$ is orthogonal to $grad \, \psi$ only at the origin. 
If we develop the equations for $iF(\nz)$ the results are similar. We thus get 
 that $\F$ is transversal to $M$ in every point away from the origin.

\medskip
\noindent {\bf Case  ii) } 
Now let  $F(\nz) = (\lambda_1 z_2^{a_2}, \lambda_2 z_1^{a_1})\,.$ The polar  variety is defined by:
$$\lambda_1 z_2^{a_2}\oz_1 + \lambda_2 z_1^{a_1}\oz_2=0\,.$$  
Arguing as above we get  that if  
we assume $F(\nz)$ is orthogonal to $grad  \,  \phi$, then:
 $$\left< F(\nz), grad  \,  \psi \right>_\R = (1 \pm a_1)|\lambda_1|^2|z_2|^{2a_2} + 
(1 \pm a_2)|\lambda_2|^2 |z_1|^{2a_1} \,.$$
This product is zero only if $\nz= \n0$. Similar computations work also for 
 $iF(\nz)$. We get that
 $\F$ and $M$ are transversal away from  the origin.

\medskip
\noindent {\bf Case  iii) } 
Now consider the following vector field in $\C^4$: $$F(\nz)= (\lambda_1 z_2^{2},
\lambda_2 z_3^{2}, \lambda_3 z_4^{2}, \lambda_4 z_1^{2})\,.$$
The equation of the polar  variety is:
\begin{equation}\label{eq. M}
h(\nz) := \lambda_1 z_2^{2} \oz_1 +\lambda_2 z_3^{2}\oz_2 +\lambda_3 z_4^{2} \oz_3 + \lambda_4 z_1^{2} \oz_4=0 \,.\end{equation}
We claim that for appropriate $\lambda_i$,  the foliation of $F$ is not transversal to $M^*$ at points of the form $ (z_1, z_2, z_3,0)$. To see this,
 assume that $F(\nz)$ is orthogonal to $grad  \,  \psi$ and $grad \, \phi$ at one of these points. We get:
\begin{eqnarray*}
0 = \frac{1}{2}\left<  F(\nz), grad  \,  \psi \right>_\R=  |\lambda_1|^2 |z_2|^{4} + |\lambda_2|^2|z_3|^{4} + |\lambda_4|^2 |z_1|^{4} 
\pm 2  |\lambda_1|^2|z_2|^{2} |z_1|^2 \\
= |\lambda_1|^2 |z_2|^{4} + |\lambda_4|^2 |z_1|^{4} +
(1\pm 2 ) |\lambda_1|^2|z_2|^{2} |z_1|^2 \,. \quad \quad  \, \,
\end{eqnarray*}  
 Notice that there are two cases to consider, depending to the $\pm$ sign on the last term in this equation.
 It is clear that with the $+$ sign this equation has no non-trivial solutions. Yet,   
it is an exercise to show that the equation with the $-$ sign has non-trivial solutions whenever 
$|\lambda_1|^2 - 4 |\lambda_4|^2>0$. Therefore in these cases  $\F$ is not transversal to $M^*$
at  points of the form $(z_1,z_2,z_3,0)$.

\end{example}

\section{Contact-analytic foliations}\label{s: contact-analytic}

From now on we assume  the Morse function $g$ is analytic on $U$.

\begin{definition}\label{contacto-analitico}
The foliation $\mathcal{F}$ is {\it contact-analytic} at $\n0$ 
with respect to  $g$  if the germ at $\n0$ of  the space
$M = M(\mathcal{F},g)$   is real
analytic.
\end{definition}

\begin{examples}
i) If $\F$ is 1-dimensional, then it is defined by a holomorphic
vector field $F$, and for the usual round metric $M$ is defined by
the analytic equations $\left<F(\nz),\nz\right> = 0$. Thus it is contact-analytic
for this metric.

\medskip

ii) If $\F$   is given by a holomorphic  action of some complex Lie group of
dimension $d$ with a fixed point at $\n0$ and trivial isotropy away from $\n0$, then  $M$ is contact-analytic
for this metric.

\medskip

iii) If $\F$ has codimension 1 and $n > 2$, then by Malgrange's theorem \cite{Malgrange} it has a first
integral. So we can assume it is defined by a holomorphic map $f$. 
The tangent space of $\F$ at each point $\nz$ is the kernel of $df$. 
 We  will see in Section \ref{s: codim 1} that $\F$ is contact-analytic for the usual metric.
\end{examples}

\medskip  

If the germ of $M$ at $\n0$ is real analytic, then it has finitely
many irreducible components. By Hironaka's work in  \cite{Hironaka:SubAnal}, 
 we can always  equip a small ball around $\n0 \in \C^n $
 with a Whitney stratification  compatible with $M$, and we can further assume  that 
 $\{\n0\}$ itself is a stratum and every other 
 stratum of $M^*$  has $\{\n0\}$ in its closure. Then
Verdier's Bertini-Sard theorem \cite{Ver} implies that 
sufficiently near the origin, 
 $g$    has no critical points on $M^* = M \setminus \{\n0\}$  in the stratified sense 
(compare with \cite{Milnor:singular}).  Then one gets that  every 
 stratum of $M^*$ meets transversally each sufficiently small sphere around $\{\n0\}$, and  each irreducible component of $M$ at $\n0$ is locally a cone
over its intersection with a sphere small enough around
$\n0$ (local conical structure of analytic sets).  
 
 If we assume that $\mathcal{F}$  carries the Morse structure of $g$, then $M^*$ is non-singular and we can assume that, after removing $\n0$, each irreducible component of $M$ at $\n0$ becomes a Whitney stratum. In this case one has that
    the gradient  flow $\mathcal G_{M^*}$  of $g|_{M^*}$ is transversal to all the $g$-spheres, its orbits accumulate at $\n0$ and it equips $M$ with the structure of a cone over $M_\e := M \cap \s_\e$ with vertex at $\n0$.
 
\begin{theorem}\label{Theorem 2}
Let $\mathcal{F}$ be   contact-analytic for the Morse function $g$ and assume it carries the Morse structure of $g$.
 Let $M_1,...,M_r$ be the  irreducible components  of the polar variety  $M$. 
Then:

i) Each 
$M_i^*:= M_i \setminus \{\n0\}$ is a real submanifold of $\C^n$ of codimension $2d$, transversal to $\mathcal{F}$ 
everywhere and transversal to all the $g$-spheres; 

ii) The $M_i$'s are pairwise disjoint away from $\n0$ and each $M_i^*$ consists of points with the same Morse index.
\end{theorem}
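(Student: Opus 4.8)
The plan is to read (i) off Theorem \ref{Theorem 1} and then use the irreducible decomposition to cut $M^*$ into the pieces $M_i^*$. Since $\F$ carries the Morse structure of $g$, Lemma \ref{M-subvariedad} already gives that all of $M^* = M \setminus \{\n0\}$ is a smooth, reduced submanifold of $\C^n$ of real codimension $2d$, transversal to $\F$. First I would note that every component is top-dimensional: a component $M_i$ with $\dim M_i < 2(n-d)$ would contain a point of $M^*$ lying on no other component, at which $M^*$ would have local dimension $< 2(n-d)$, contradicting purity. Then, at any $\nz \in M_i^*$ the germ $(M_i,\nz)$ is a full-dimensional closed analytic subgerm of the smooth germ $(M^*,\nz)$, hence equals it; so $M_i^*$ is open in $M^*$, and it is closed since $M_i$ is. Thus each $M_i^*$ is a union of connected components of $M^*$ and inherits smoothness, codimension $2d$ and transversality to $\F$. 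Transversality to the $g$-spheres is then exactly the Verdier--Bertini--Sard statement recalled before the theorem: each $M_i^*$ is a Whitney stratum carrying no stratified critical point of $g$ near $\n0$, so it meets every small $\s_\e$ transversally. This proves (i).

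For the disjointness in (ii) I would argue by dimension. By the previous step each $M_i^*$ is open in $M^*$, so if $M_i^* \cap M_j^* \neq \emptyset$ for $i \neq j$ this intersection is a nonempty open subset of the $2(n-d)$-dimensional manifold $M^*$. Being open and closed in $M^*$, the sets $M_i^*$ are unions of connected components, so this intersection is invariant under the gradient flow $\mathcal G_{M^*}$, whose orbits accumulate at $\n0$; therefore $\n0$ lies in its closure and $\dim_{\n0}(M_i \cap M_j) = 2(n-d) = \dim M_i$. Since distinct irreducible components of the germ $(M,\n0)$ must meet in dimension strictly below $2(n-d)$, this forces $M_i = M_j$, a contradiction. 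Hence $M^* = \bigsqcup_i M_i^*$.

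For the index statement the first step is local constancy. In a foliated chart the Hessian $H(\nz)$ of $g_{\L_\nz}$ built in the proof of Lemma \ref{M-subvariedad} has real analytic entries in $\nz$, and it is nonsingular at every point of $M^*$ because $\F$ carries the Morse structure of $g$. Its eigenvalues depend continuously on $\nz$ and none of them vanishes on $M^*$, so no eigenvalue can change sign; the number of negative eigenvalues, that is the Morse index, is therefore locally constant on $M^*$, in particular along each orbit of $\mathcal G_{M^*}$ and on each connected component of $M^*$.

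The main obstacle is to promote this to constancy on all of $M_i^*$, which by the conical structure amounts to showing the index agrees across the connected components of $M_i^*$, equivalently across the components of the link $M_i \cap \s_\e$. This is genuinely the heart of the matter, since it does not follow from irreducibility alone: an irreducible real analytic germ may have a disconnected regular locus --- the double cone $\{z^2 = x^2 + y^2\}$ is the standard model --- and on such a set a real analytic function can change sign only at the vertex, so a priori the index might jump from one nappe to another. The plan is to exclude this using the holomorphic hypotheses. Writing $H(\nz)$ in leaf coordinates as $\mathrm{Re}\langle P w,w\rangle + \langle Q w,w\rangle$ with $Q$ Hermitian and $P$ complex symmetric, I would show that the negative-eigenvalue count --- which we already know is bounded by $d$ --- is governed by data that cannot separate the components of an irreducible $M_i$, so that the transverse complex structure of Remark \ref{r: Haefliger} forces a single value of the index on each $M_i^*$. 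This last step, where holomorphy and contact-analyticity are really used, is the one I expect to require the most care.
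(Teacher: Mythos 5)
Your treatment of (i) and of the pairwise disjointness is sound and matches the paper in substance: the paper simply declares (i) an immediate consequence of Theorem \ref{Theorem 1} together with the Verdier--Bertini--Sard discussion preceding the statement, and your openness/closedness argument showing each $M_i^*$ is a union of connected components of $M^*$ is a more careful version of the same thing. (For disjointness one can argue even more directly: at a common point $\nz \ne \n0$ of $M_i$ and $M_j$ the germ of $M$ is smooth and reduced, hence irreducible as a germ, so $M_i$ and $M_j$ coincide near $\nz$ and therefore as irreducible components.)

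The genuine gap is in the index statement, and you have named it yourself. Your argument establishes local constancy of the index on $M^*$ (the eigenvalues of the real analytic Hessian $H(\nz)$ vary continuously and never vanish there), hence constancy on each connected component of $M_i^*$; but the passage to constancy on all of $M_i^*$ --- that is, across the possibly several connected components of the link $M_i \cap \s_\e$ of an irreducible \emph{real} analytic germ, your double-cone scenario --- is left as a plan (``the transverse complex structure forces a single value'') with no actual argument. As written, that step is not a proof. For comparison, the paper's own proof of (ii) is exactly your local-constancy step in different clothing: it joins two points of $M_i$ of different index by an arc and invokes stability of Morse singularities to produce a degenerate contact on the arc. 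That argument is complete only if the two points can be joined by an arc avoiding $\n0$, i.e., only if $M_i^*$ is connected; if the arc must pass through $\n0$, the ``degenerate contact'' it produces is the origin itself, about which the hypotheses say nothing. So you have correctly isolated the one point where an additional input is needed (connectedness of $M_i^*$, or a genuine argument comparing indices across nappes), but you have not supplied it, and neither does the paper.
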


\begin{proof}
Item (ii) is the only statement that needs a proof, the other statement is  an immediate consequence of Theorem
 \ref{Theorem 1}. For proving (ii), supose
we have points $p_1$ and $p_2$ in some $M_i$ with different Morse index, and consider an 
arc $\alpha$ in $M_i$  joining these points. Then necessarily exits a point $q$ in this arc which 
 is a degenerate contact, because Morse singularities are stable and we cannot have a continuous family of Morse singularities   with distinct Morse indices. \end{proof}

\section{Morse theory and the topology of the leaves}\label{Sec: Morse} 

Recall from Section \ref{sec. definitions} that 
given a leaf  $\L$ of $\F$, we denote by $g_\L$   the
restriction of $g$ to  $\L$.  Lemma \ref{puntos-crit-tangentes} says that
 the critical points of $g_{\mathcal{L}}$ are the points where $\mathcal{L}$ is tangent to the level surfaces
of $g$. 

The {\it $\alpha$-limit} of a point $\nz \in \U$ under the flow $\mathcal G_{\F}$, is the set $\alpha(\nz)$ consisting of 
all points $\ny \in \U$ for which we can find a sequence $t_n \in \R$,  such that $ \lim_{t_n  \to - \infty} \mathcal G_\L(t_n,\nz) = \ny$. 
The {\it $\alpha$-limit} of a saturated set  is the union of the $\alpha$-limits of points in that set.

\medskip

The proof of the following two lemmas is an exercise, and it is exactly as the proof of propositions 3.1 and 3.2 in \cite{GSV} for 1-dimensional foliations. Notice that in the case of the usual metric, 
the gradient flow we consider here is the opposite of the radial flow envisaged in that article, so we consider $\alpha$-limits instead of $\omega$-limits.

\begin{lemma} If $M$ carries a Morse structure, then
the  {\it $\alpha$-limit} of $\mathcal G_{\F}$ is the polar variety $M$.
\end{lemma}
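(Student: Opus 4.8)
The plan is to establish the set equality by proving the two inclusions $M \subseteq \bigcup_{\nz}\alpha(\nz)$ and $\bigcup_{\nz}\alpha(\nz) \subseteq M$, where the union runs over the points of a small $g$-ball $\B_\e$. Everything rests on the remark recorded just before the statement, that the restriction of $\nabla g_\F$ to a leaf is a genuine gradient field: since $\nabla g_\F(\nz)$ is the orthogonal projection of the ambient gradient $\nabla g(\nz)$ onto $T_\nz\L_\nz$ and each leaf carries the metric induced from $\C^n$, this restriction is exactly the Riemannian gradient of $g_\L$. Hence along any non-stationary orbit $t\mapsto \mathcal G_\F(t,\nz)$ one has $\frac{d}{dt}\,g(\mathcal G_\F(t,\nz)) = \|\nabla g_\L\|^2 > 0$, so $g$ strictly increases forward and strictly decreases backward in time. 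I normalise $g(\n0)=0$ and use that, $\n0$ being a critical point of index $0$, one has $g \ge 0$ on $\B_\e$.

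The inclusion $M \subseteq \bigcup_{\nz}\alpha(\nz)$ is immediate from the earlier lemma identifying $M$ with the fixed-point set of $\mathcal G_\F$: each $\np \in M$ is stationary, so $\alpha(\np)=\{\np\}$ and $\np$ lies in the $\alpha$-limit.

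For the reverse inclusion I would fix $\nz\in\B_\e$; if $\nz\in M$ there is nothing to prove, so I assume the orbit is non-stationary. Because $g$ decreases backward in time, the negative semi-orbit $\{\mathcal G_\F(t,\nz):t\le 0\}$ is confined to the compact set $\{g\le g(\nz)\}\cap\overline{\B_\e}$; this forces the flow to be defined for all $t\le 0$ and makes $\alpha(\nz)$ non-empty, compact and $\mathcal G_\F$-invariant. Monotonicity together with $g\ge 0$ yields a finite limit $c:=\lim_{t\to-\infty} g(\mathcal G_\F(t,\nz))$, and by continuity $g\equiv c$ on $\alpha(\nz)$. Given $\ny\in\alpha(\nz)$, invariance of the $\alpha$-limit places the entire orbit through $\ny$ inside $\alpha(\nz)$, so $g$ is constant along it; were $\ny\notin M$ this orbit would be non-stationary and $g$ strictly monotone along it, a contradiction. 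Thus $\ny\in M$, i.e. $\alpha(\nz)\subseteq M$, which finishes the argument.

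The substantive point is not the algebra but coaxing the local flow to cooperate: one must secure backward completeness and rule out the negative semi-orbit escaping $\U$ or accumulating uncontrollably at the singular point $\n0$. The decisive mechanism is the backward decrease of $g$, which traps the semi-orbit in the compact sub-level region and leaves only stationary accumulation available there, namely either $\n0$ or a non-degenerate contact in $M^*$, both lying in $M$. This is precisely the gradient-flow reasoning of Propositions 3.1--3.2 of \cite{GSV}, transcribed from $1$-dimensional leaves to leaves of dimension $d$, with $\alpha$-limits in place of the $\omega$-limits used there.
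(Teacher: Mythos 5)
Your proof is correct and follows exactly the route the paper intends: the paper does not write out an argument but declares the lemma ``an exercise, exactly as the proof of propositions 3.1 and 3.2 in \cite{GSV}'', i.e.\ the standard Lyapunov/gradient-flow argument that $g$ is strictly monotone along non-stationary orbits, so $\alpha$-limit sets are trapped in compact sublevel regions, carry a constant value of $g$, and hence consist of fixed points of $\mathcal G_\F$, which are precisely the points of $M$. Your attention to backward completeness and to the role of $\n0$ (which belongs to $M$ by the paper's convention) fills in exactly the details the authors leave to the reader.
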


\begin{lemma} 
Let  $\widehat M$ be the saturated of $M$ by $\F$, {\it i.e.},  the union   of all leaves that meet $M$, and let  $K := \U_\e \setminus \widehat M$. 
Then the $\alpha$-limit of $K$ by the flow $\mathcal G_{\F}$ is the origin $\n0$, and the $\alpha$-limit of each  
$\nz \in \widehat M$  is the origin or the  point where the corresponding flow line meets $M$.
\end{lemma}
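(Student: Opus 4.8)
\emph{The plan is} to treat $\mathcal G_{\F}$ as a gradient-type flow for which $g$ is a strict Lyapunov function, and then read off the $\alpha$-limits from the places where this Lyapunov function can stop decreasing. First I would record that, along each leaf $\L$, the field $\nabla g_\F$ is exactly the gradient $\nabla g_\L$ of $g_\L$ for the induced metric, so that
$$\frac{d}{dt}\, g\big(\mathcal G_{\F}(t,\nz)\big)=\big\langle \nabla g(\mathcal G_{\F}(t,\nz)),\,\nabla g_\F\big\rangle=|\nabla g_\F|^2\ge 0,$$
with equality only at the fixed points of $\mathcal G_{\F}$, which by the preceding lemma are precisely the points of $M$. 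Hence $g$ is strictly increasing along every non-constant orbit in forward time and strictly decreasing in backward time. Shrinking $\e$ so that $g^{-1}([0,\e])$ is a compact ball on which $g>0$ off $\n0$, the backward orbit of any $\nz\in\U_\e$ stays in the compact sublevel set $g^{-1}([0,g(\nz)])$; therefore $\alpha(\nz)$ is non-empty, compact, connected and $\mathcal G_{\F}$-invariant, and $g\equiv \ell:=\lim_{t\to-\infty}g(\mathcal G_{\F}(t,\nz))\ge 0$ on it. Invariance together with the strict monotonicity forces every orbit contained in $\alpha(\nz)$ to be constant, so $\alpha(\nz)$ is contained in the fixed-point set $M$.

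For $\nz\in K$ I want to show $\ell=0$. If $\ell=0$ then $\alpha(\nz)\subseteq g^{-1}(0)=\{\n0\}$ and we are done, so suppose $\ell>0$ and pick $p\in\alpha(\nz)\cap M^*$. Here I would use Theorem \ref{Theorem 1}: near $p$ the set $M^*$ is smooth of codimension $2d$ and transversal to $\F$, so in a foliated chart $U'$ around $p$ the set $M^*\cap U'$ is a section meeting each plaque in exactly one point. Since $\gamma(t_n):=\mathcal G_{\F}(t_n,\nz)\to p$ for some $t_n\to-\infty$, for large $n$ the plaque of $\L_\nz$ through $\gamma(t_n)$ lies in $U'$ and hence meets $M^*$ at a point of $\L_\nz\cap M$, contradicting $\L_\nz\cap M=\varnothing$. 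Thus $\ell=0$, so $\alpha(\nz)=\{\n0\}$, and taking the union over $\nz\in K$ gives the first assertion.

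For $\nz\in\widehat M$ the same dichotomy applies: if $\ell=0$ then $\alpha(\nz)=\{\n0\}$, while if $\ell>0$ then $\alpha(\nz)$ is a connected set of contacts in $M^*\cap g^{-1}(\ell)$, and the chart argument above now shows that these contacts in fact lie on $\L_\nz$, i.e. in $\L_\nz\cap M$. It remains to promote this connected limit set to the single contact point asserted in the statement. Here I would invoke that $\F$ carries the Morse structure of $g$, so each contact is a hyperbolic (nondegenerate) rest point of $\mathcal G_{\F}$ on its leaf, with local stable and unstable manifolds; equivalently, since $g$ is analytic, the Lojasiewicz gradient inequality applies to $g_{\L_\nz}$ and shows that the backward orbit has finite length and converges to a single critical point. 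Either way $\alpha(\nz)$ reduces to the one point of $\L_\nz\cap M$ at which the flow line terminates.

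The routine part is the Lyapunov/LaSalle bookkeeping of the first paragraph, together with the transversality argument that rules out non-trivial limits on $M^*$ for leaves in $K$; the one genuinely delicate step is the last, namely upgrading the connected $\alpha$-limit set to a single contact point, since a priori a gradient-like orbit could spiral into a whole connected component of the rest set. The cleanest way around this is to use the analyticity of $g$ and the Lojasiewicz inequality (or, locally, the stable-manifold theorem at the nondegenerate contacts), exactly as in the proofs of Propositions 3.1 and 3.2 of \cite{GSV}.
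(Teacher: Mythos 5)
First, a remark on the comparison itself: the paper does not actually write out a proof of this lemma --- it declares the proof ``an exercise'' that goes ``exactly as the proof of propositions 3.1 and 3.2 in \cite{GSV}''. Your Lyapunov/LaSalle scheme (namely that $g$ strictly increases along non-constant orbits because $\tfrac{d}{dt}(g\circ\gamma)=|\nabla g_\F|^2$, that backward orbits are trapped in compact sublevel sets, and hence that $\alpha$-limits are non-empty, compact, connected, invariant and contained in the fixed set $M\cup\{\n0\}$ at a single level $\ell$) is precisely the argument the paper is pointing to, and your treatment of $K$ is complete and correct: if $\ell>0$, transversality of $M^*$ to $\F$ from Theorem \ref{Theorem 1} forces a plaque of $\L_\nz$ to meet $M^*$, contradicting $\L_\nz\cap M=\varnothing$.

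There is, however, a genuine gap in the $\widehat M$ case, at exactly the step you yourself flag as delicate. Your chart argument does \emph{not} show that a point $p\in\alpha(\nz)\cap M^*$ lies on $\L_\nz$: it only produces contact points $q_n\in\L_\nz\cap M$ on the plaques through $\gamma(t_n)$, with $q_n\to p$, i.e.\ $p\in\overline{\L_\nz\cap M}$. Consequently both of your proposed finishing tools are aimed at the wrong target: the Lojasiewicz inequality for $g_{\L_\nz}$, and likewise the stable/unstable manifolds of a contact of $\L_\nz$, yield convergence only once the backward orbit is known to accumulate, \emph{in the leaf topology}, at a critical point of $g_{\L_\nz}$. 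Since leaves of a singular holomorphic foliation need not be closed in $\U$, the orbit could a priori escape every compact subset of $\L_\nz$ while its ambient $\alpha$-limit is a continuum in $M^*\setminus\L_\nz$ (a limit of infinitely many distinct contacts $q_n$ of $\L_\nz$), and nothing in your write-up excludes this. The repair has to be carried out in the ambient space along $M^*$: transversality of $M^*$ to $\F$ together with nondegeneracy of the leafwise Hessian gives a Morse lemma with parameters in a product neighbourhood $W\cong\D^{2d}\times T$ of any $p\in M^*$, in which each plaque is a single Morse chart whose unique rest point is its contact with $M^*$; one then shows, using $g(\gamma(t))\downarrow\ell=g(p)$, that the backward orbit eventually remains in such a $W$ and therefore converges to the contact point of its own plaque, which is then the single point of $\L_\nz\cap M$ asserted in the statement. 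This normally hyperbolic, parametrized version of your local analysis is the part of the ``exercise'' that genuinely uses the Morse-structure hypothesis rather than just analyticity of $g$, and it is the step carried out in \cite{GSV} for $d=1$.
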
 

We now have:

\begin{theorem}\label{t: Morse theory}  
Assume $\F$ is contact-analytic for $g$ and  carries  its  Morse structure. We now restrict
 $\F$ to a small closed ball $\B_\e$ centred at $\n0 \in \C^n$.
Let  $M$ be the intersection with $\B_\e$ of the corresponding polar variety in $\U$,  let  $\widehat M$ be the saturated of $M$ by $\F$ in $\B_\e$ and
 set  $K := \B_\e \setminus \widehat M$. Then:
\begin{enumerate}
\item The gradient flow endows
$K$ with the structure of a foliated cone with deleted vertex at $\n0$. In fact,  each leaf $\L \subset K$ is transversal to 
all the $g$-spheres around $\n0$, it is diffeomorphic to the product $(\L \cap \s_\e) \times \R$ and it has the origin in its closure. 

\item If $\L$ is compact, then its Euler-Poincar\'e characteristic $\chi(\L)$ equals the number of intersection points in $\L \cap M$, counted with sign. The sign is negative when the corresponding Morse index is odd, and 
positive   otherwise.
\end{enumerate}
\end{theorem}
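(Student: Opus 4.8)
\section*{Proof proposal for Theorem \ref{t: Morse theory}}

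The plan is to reduce both statements to classical Morse theory for the single function $g_\L$ on each leaf. By Lemma~\ref{puntos-crit-tangentes} the critical points of $g_\L$ are exactly the points of $\L \cap M$, and since $\F$ carries the Morse structure of $g$ these are all non-degenerate; moreover, by construction $\mathcal G_\F$ restricts on $\L$ to the gradient flow of $g_\L$, so the rest points of the flow on $\L$ are precisely $\L \cap M$. Recall also that the index of the gradient vector field $\nabla g_\L$ at a non-degenerate critical point of Morse index $k$ equals $(-1)^k$; this is the sign that will appear in (2).

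For item (1), a leaf $\L \subset K$ meets $M$ nowhere, so $g_\L$ has no critical points and $\nabla g_\L = \mathrm{proj}_{T\L}\,\nabla g$ is nowhere zero on $\L$. Nonvanishing of this projection says that $T_\nz\L$ is not contained in the tangent hyperplane of the $g$-sphere through $\nz$, which is exactly transversality of $\L$ to every $g$-sphere. With no critical values present, the normalized gradient flow trivializes $\L$: integrating the flow and using the level set $\L \cap \s_\e$ as a fibre gives the diffeomorphism $\L \cong (\L \cap \s_\e)\times\R$. That the origin lies in the closure of $\L$ and that the $\alpha$-limit of each orbit is $\n0$ is precisely the content of the preceding lemma identifying the $\alpha$-limit of $K$ with $\n0$.

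For item (2), assume $\L$ is compact and choose $\e$ to be a regular value of $g_\L$, so that none of the finitely many points of $\L \cap M$ lies on $\s_\e$; then $W := \L \cap \B_\e$ is a compact manifold (with boundary $\L \cap \s_\e$ when $\L$ is not closed) whose critical points are all interior and non-degenerate. Passing a critical point of index $k$ attaches a $k$-handle and changes the Euler characteristic by $(-1)^k$, so building $W$ up along the gradient from its minimum yields $\chi(W) = \sum_k (-1)^k c_k = \sum_{p \in \L \cap M}(-1)^{\mathrm{ind}(p)}$, the asserted signed count. Equivalently, this is Poincar\'e--Hopf for $\nabla g_\L$, whose index at an index-$k$ zero is $(-1)^k$.

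The only point requiring care is the boundary $\partial W = \L \cap \s_\e$. Because $g$ attains its minimum at $\n0$ and the $g$-balls are its sublevel sets, the boundary sits at the maximal value $g \equiv \e$ on $W$, so $\nabla g_\L$ points outward along it. Under this outward condition the top of $W$ is a collar carrying no critical points, whence $W$ is homotopy equivalent to the sublevel set built from the interior critical points and no boundary correction term enters the Euler-characteristic count; had the gradient pointed inward one would instead pick up a $\chi(\partial W)$ term. (When $\L$ is a closed leaf there is no boundary and the formula is immediate from classical Morse theory.) I expect the verification of this outward-pointing condition, together with the choice of a regular value $\e$ avoiding the contact points that $M$ places on the spheres, to be the only genuinely delicate steps; the remainder is the standard handle calculus of a Morse function.
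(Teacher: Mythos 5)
Your proposal is correct and follows essentially the same route as the paper: item (1) is deduced from the $\alpha$-limit lemmas together with the nonvanishing of the projected gradient on leaves in $K$, and item (2) is the Poincar\'e--Hopf theorem for the gradient vector field $\nabla g_{\F}$ on the leaf with boundary $\L \cap \s_\e$, the local index at a Morse point of index $k$ being $(-1)^k$. Your extra care about choosing $\e$ so that no point of $\L \cap M$ lies on $\s_\e$ and about the outward-pointing condition along $\partial(\L \cap \B_\e)$ makes explicit what the paper only summarizes as transversality of the flow to the boundary.
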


\begin{proof}
Item (1) follows immediately from the two lemmas above. 
 Item (2) follows from the considerations above and the theorem of Poincar\'e-Hopf for manifolds with boundary, since 
the vector field corresponding to the gradient flow on the leaves is transversal to the boundary. Hence the Euler-Poincar\'e
 characteristic $\chi(\L)$ equals the total Poincar\'e-Hopf index of this flow on the leaf. At each singularity of the flow, the  
local Poincar\'e-Hopf index is determined by the corresponding Morse index: it is $1$ when the Morse index is  even (or $0$), and
 it is $-1$ when the Morse index is odd.
\end{proof}

\begin{remark}
 Notice that if a leaf $\L \subset \widehat M$ is compact, then not only its  Euler-Poincar\'e characteristic is determined as   above, but actually  classical Morse theory  tells us that  its whole topology  is  determined by its intersection with the boundary sphere and  the Morse indices at the points where $\L$ meets $M$. To some extent, this comments  apply  also for non-compact leaves, but here the situation can be far more complicated.
\end{remark}


\section{The foliation on the spheres}\label{s: spheres}

Notice that given the germ $(\F,\n0)$ and a small $g$-sphere
$ \s_{\e}$, the intersection of the leaves of $\F$ with
$ \s_{\e}$ defines a real analytic foliation $\F_{\e,g}$ on the
sphere, which is singular at $M_\e(\F,g) := \s_{\e} \cap M(\F,g) $.
 Away from $M_\e(\F,g)$ the leaves of $\F_{\e,g}$ have
real dimension $2d-1$.

Let us assume $\F$ is contact-analytic and carries the Morse structure of  $g$.
Let $M_1,...,M_r$ be the irreducible components of the germ
$(M,\n0)$. For $\e >0$ sufficiently small, each of these components
meets $ \s_{\e}$ transversally in a smooth submanifold of the
sphere, called {\it the link} $L_{i,\e}$  of the corresponding singularity.
It is well known,  by work of  Milnor and others (see  for instance \cite[Theorem 1.15]{CSS})  that there exists a
homeomorphism of the form:
$$\big(\B_\e, \B_\e \cap (\cup  M_i)\big) \longrightarrow
{\rm Cone}\big(\s_\e, \cup L_{i,\e} \big)\,.$$
On the other hand, for all $ \e > \e' > 0$  small enough we have a
diffeomorphism sending $\s_\e$ into $\s_\e'$ and the singular set 
of   $\F_{\e,g}$ into the corresponding one for $\F_{\e',g}$, {\it i.e.},
$(\s_\e, \bigcup L_{i,\e} \big) \cong (\s_\e',\bigcup L_{i,\e'} \big)$. Thus, it is natural to ask whether the 
corresponding foliations  are topologically conjugate. That is:

\nn {\bf Question 1.} Is there   a homeomorphism (or a
diffeomorphism)  from $\s_\e$ into $\s_\e'$  carrying the leaves of $\F_{\e,g}$ into those of
$\F_{\e',g}$?  In other words, are these foliations equivalent,
either topologically or differentiably?

\medskip 
The answer in general seems to be  negative, though it is positive in some settings, as shown by 
 the theorem below. This result is
an extension of the equivalent theorem in \cite{GSV} for 1-dimensional foliations.

\begin{theorem}\label{Theorem 3} 
If the Morse index of each point in $M^*$ is $0$, then for all $\e
> \e' >0$  small enough, the induced foliations on the
spheres are smoothly equivalent. That is, there is a
diffeomorphism
$$(\s_\e, \F_{\e}, M_\e \big) \buildrel{\cong} \over {\longrightarrow} (\s_{\e'},\F_{\e'}, M_{\e'}
 \big)\,,$$ taking  leaves of $\F_{\e}$ into
leaves of $\F_{\e'}$, where $M_\e$ and  $M_{\e'}$ are the singular sets of $\F_{\e}$
and $\F_{\e'}$, respectively.
\end{theorem}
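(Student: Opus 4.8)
The plan is to manufacture a single vector field $X$ on the compact spherical shell $\{\e' \le g \le \e\}$ whose flow simultaneously (i) is transversal to every $g$-sphere, (ii) carries leaves of $\F$ to leaves of $\F$, and (iii) preserves the polar variety $M$; flowing from $\s_\e$ down to $\s_{\e'}$ then yields the desired diffeomorphism of triples. First I would record the local picture. By Theorems \ref{Theorem 1} and \ref{Theorem 2}, $M^*$ is a smooth submanifold of codimension $2d$, transversal both to $\F$ and to all small $g$-spheres, and it carries the cone flow $\mathcal G_{M^*}$ of $g|_{M^*}$. Since $M^*$ is transversal to $\F$ of complementary dimension, it serves as a local transversal section: near a point of $M^*$ the leaves meeting $M$ sweep out a full neighbourhood. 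Taking $M^*$ as transverse parameter $t$ and applying the Morse lemma with parameters to the index-$0$ hypothesis, I obtain an adapted normal form $g = \mu(t) + |x|^2$, where $x$ are leaf coordinates centred at the contact and $\mu(t) = g|_{M^*}$. This exhibits $\F_\e$ near $M_\e$ as a family of concentric $(2d-1)$-spheres linking $M_\e$, a model that is the same at every radius; and, since a connected leaf admitting only minima of $g_\L$ has a single such minimum, it shows that inside $\{g \le \e\}$ each leaf meeting $M$ is a small disc around its minimum, i.e. such leaves stay confined near $M^*$.

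With this in hand I would build $X$ by interpolating two ingredients. On the region $K$ of leaves that miss $M$, the leafwise gradient $\nabla g_\F = \mathcal G_\F$ is already transversal to the spheres and tangent to the leaves, hence leaf-preserving; I set $X = \nabla g_\F$ there. Near $M$ I take $X$ to be a foliation-preserving lift of the cone flow $\mathcal G_{M^*}$: in the normal form this is $\dot t = \nabla g_{M^*}$, $\dot x = 0$, which is tangent to $M^*$ along $M^*$ and projectable to the local leaf space, hence sends leaves to leaves. The two are glued by a cutoff that is constant along plaques, so that the glued field stays foliation-preserving. Transversality to the spheres is then automatic: one has $dg(\nabla g_\F) = |\nabla g_\F|^2 \ge 0$, with equality exactly on $M$, while the lifted cone flow satisfies $dg > 0$ near $M$; since the cutoff equals $1$ on $M$ (where $\nabla g_\F$ vanishes) and the two nonnegative contributions never vanish simultaneously, $dg(X) > 0$ throughout the shell. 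By construction $X$ is tangent to $M$ along $M$, so its flow preserves $M$.

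Finally, because $dg(X) > 0$ on the compact shell $\{\e' \le g \le \e\}$, which avoids $\n0$, each orbit crosses each level $\s_c$ exactly once; reparametrising the flow by the value of $g$ produces a diffeomorphism $\Phi \colon \s_\e \to \s_{\e'}$. As $X$ is foliation-preserving, $\Phi$ sends the trace foliation $\F_\e$ to $\F_{\e'}$, and as $X$ is tangent to $M$ it sends $M_\e$ to $M_{\e'}$, which is exactly the asserted equivalence. The main obstacle is the globalisation in the second paragraph: reconciling ``transversal to the spheres'', ``tangent to $M$'', and ``foliation-preserving'' in one field. The last property forces the interpolating cutoff to be constant along plaques, so the correction near $M$ is effectively switched on along entire leaves; this is harmless only because the index-$0$ hypothesis confines every leaf meeting $M$ to a neighbourhood of $M^*$ within the shell and makes the transverse model of concentric linking spheres independent of the radius. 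For contacts of higher index the leaves through $M$ descend to smaller $g$-values and the trace foliation genuinely bifurcates as a sphere crosses a critical value, so no such $X$ can exist; this is precisely where, and why, the hypothesis is used. The scheme is the higher-dimensional counterpart of the argument in \cite{GSV}, which I would follow for the remaining details of the gluing.
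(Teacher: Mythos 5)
Your overall strategy---the leafwise gradient $\nabla g_\F$ away from $M$, the cone flow $\mathcal G_{M^*}$ near $M$, and the index-$0$ hypothesis to exhibit the trace foliation near $M_\e$ as concentric $(2d-1)$-spheres whose transverse model is independent of the radius---is the same skeleton as the paper's proof. The gap is in the step that converts this into the desired diffeomorphism. You glue the two ingredients into a single vector field $X$ with $dg(X)>0$ and define $\Phi\colon\s_\e\to\s_{\e'}$ by following orbits until they reach $\s_{\e'}$, asserting that ``as $X$ is foliation-preserving, $\Phi$ sends $\F_\e$ to $\F_{\e'}$.'' That inference fails wherever $X$ is not tangent to the leaves. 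If $X$ is merely an infinitesimal automorphism of $\F$, its time-$s$ flow carries a leaf $\L$ to a leaf $\L_s$ that varies with $s$; two points $p,q$ of the same leaf of $\F_\e$ reach $\s_{\e'}$ at generally different times $s(p)\neq s(q)$, because $dg(X)=\phi\, dg(X_{\rm lift})+(1-\phi)\,|\nabla g_\F|^2$ contains the term $|\nabla g_\F|^2$, which is not constant along plaques. Hence $\Phi(p)\in\L_{s(p)}$ and $\Phi(q)\in\L_{s(q)}$ land on different leaves of $\F_{\e'}$. A toy example already exhibits the failure: for the foliation of $\R^2$ by horizontal lines, $g=x^2+y^2$ and $X=-x\,\partial_x-y\,\partial_y+\sigma(x)\,\partial_x$ (an automorphism of the foliation, transverse to the circles), the hitting-time map between two circles does not match up points with equal $y$. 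Normalising so that $dg(X)\equiv 1$ would equalise the hitting times but destroys projectability of the transverse component, since you divide by a function that is not constant on plaques; and you cannot make $dg$ of the leaf-tangent summand plaque-constant because $\nabla g_\F$ vanishes on $M$.

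This difficulty is precisely what the paper's proof is organised to avoid: the diffeomorphism is built in two separate pieces rather than as one ambient flow. Outside a tubular neighbourhood $V_\e$ of $M_\e$ bounded by sphere-leaves, the map is the holonomy of the leaf-tangent field $\nabla g_\F$, so there is no transverse motion and no hitting-time incoherence. Inside $V_\e$ no flow from $\s_\e$ to $\s_{\e'}$ is used at all: one first prescribes which concentric sphere-leaf of each normal disc $\mathbb{D}^{\mathcal{N}}_{\e,\nz}$ goes to which sphere-leaf of $\mathbb{D}^{\mathcal{N}}_{\e',H(\nz)}$, via a chosen family of diffeomorphisms of the arcs $[\np(\L_\partial),\nz)$ of contact points along $\mathcal G_{M^*}$ that parameterise those spheres, and only then extends the boundary identification $\partial V_\e\to\partial V_{\e'}$ disc by disc using radial vector fields. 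To repair your argument you would need to replace the hitting-time map near $M$ by such a leaf-by-leaf prescription; as written, the quoted sentence is the missing step. Your closing remarks about why higher-index contacts obstruct the construction, and the confinement of the relevant leaf components near $M^*$, are correct in spirit (note only that it is the component of $\L\cap\B_\e$ through the contact point, not the whole leaf, that is a disc).
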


\begin{proof}
All points in $M^*$ have Morse index 0, so they are points of local minimal $g$-distance to $\n0$ in their leaves. Hence every leaf of $\mathcal{F}_\e$ near $M$ is compact and diffeomorphic to a sphere of dimension $2d-1$. 
Let $V_\e$ be a tubular neighbourhood of $M_{\e}$ in $\s_\e$  such that its boundary $\partial V_\e$
is formed by such leaves.

  Let  $\mathcal G_{M^*}$ 
be the gradient  flow of $g|_{M^*}$, as in 
Section \ref {s: contact-analytic}. Its orbits accumulate at $\n0$ and they are 
  transversal to all sufficiently small $g$-spheres. This gives  a diffeomorphism $M_\e  \rightarrow M{_\e'}$ as in \cite{Milnor:singular}, which is our starting point for 
  comparing the foliation $\F_\e$ with the foliation $\F_{\e'}$ for $\e > \e' > 0$. The claim (to be proved) is that these foliations are topologically, and actually smoothly, equivalent.  By compactness, we can assume that $ \e' $ is close enough to $\e$ so that every leaf in $\partial V_\e$ meets $\s_{\e'}$ transversally, for otherwise we can take a finite number of spheres with $\e = \e_0 > \e_1 > \cdots > \e_r = \e'$ and apply the following arguments step by step.

Consider the gradient  flow   $\mathcal G_{M^*}$, and for each $\nz \in M_\e$,   let  $\gamma_t(\nz)$ be the flow line  of  $\mathcal G_{M^*}$ passing through $\nz$.
Notice that $\gamma_t(\nz)$ intersects  every  sphere $\s_{\e_t}$, $\e \ge \e_t \ge \e' $, in a single point. We   call $\ny(\nz)$
 the corresponding point in $\s_{\e'}$. 
Each point in  $\gamma_t(\nz)$ between the points $\nz$ and  $\ny(\nz)$ belongs to a leaf 
$\mathcal{L}$ in $\mathcal{F}$ such that the intersection  $\mathcal{L}\cap S_\e$ is a sphere contained in the interior of the tubular
 neighbourhood $V_\e$. Also, each leaf $\L_\partial$ in $\partial V_\e$ corresponds to a point  $\np(\L_\partial)$  in $M$ whose $g$-distance to $\n0$ is strictly less than $\e'$, which is the point of contact of $\L_\partial$ with a $g$-sphere. 
 
 For each leaf $\L_\partial$ in $\partial V_\e$, let $\np(\L_\partial)$ be its point of contact  in $M$, as above. 
Let $\gamma_t(\np(\L_\partial))$ be the flow line through $\np(\L_\partial)$, let $\nz(\L_\partial)$  be the point where this arc meets $\s_\e$ and  
 $\ny(\L_\partial)$  the point where $\gamma_t(\np(\L_\partial))$  meets $\s_{\e'}$. The points in the arc $[\np(\L_\partial), \nz(\L_\partial)\big)$ in the curve 
 $\gamma_t(\np(\L_\partial))$ determine a 1-parameter family of $(2d-1)$ spheres in $\s_\e$, nested around the point 
 $\nz(\L_\partial)$, forming a disc $\mathbb{D}^{\mathcal{N}}_{\e, \nz}$ of dimension $2d$, transversal to $M^*$ at $\nz(\L_\partial)$. Notice that the interval $[\np(\L_\partial), \nz(\L_\partial)\big)$  is taken to be open on the right because the end-point corresponds to the centre of the disc.
 Doing so for all points in $M_\e$ we get back the neighbourhood $V_\e$, naturally identified with the normal bundle of $M_\e$ in the sphere, and each normal fibre $\mathbb{D}^{\mathcal{N}}_{\e, \nz}$ is foliated by concentric spheres which are leaves of $\F_\e$.

 Similarly, the points in the arc $[\np(\L_\partial), \ny(\L_\partial)\big)$ in the curve 
 $\gamma_t(\np(\L_\partial))$ determine a 1-parameter family of $(2d-1)$ spheres in $\s_{\e'}$, nestled around the point 
 $\ny(\L_\partial)$, forming a disc  $\mathbb{D}^{\mathcal{N}}_{\e', \ny}$ of dimension $2d$, transversal to $M^*$ at $\ny(\L_\partial)$. Doing so for all points in $M_{\e'}$ we get a tubular neighbourhood $V_{\e'}$ of $M_{\e'}$ in $\s_{\e'}$.

Let $\mathcal{C}_{\e,\e'}$ be the cylinder  bounded by the spheres $\s_\e$ and $\s_{\e'}$. This cylinder is a union of two compact foliated sets $\mathcal{A}_{\e,\e'}$, $\mathcal{V}_{\e,\e'}$ obtained as follows:
If $\buildrel{\circ} \over {V}_\e$ is the interior of $V_\e$, then $\mathcal{A}_{\e,\e'}$ is the subset of $\mathcal{C}_{\e,\e'}$ obtained by 
 saturating $\s_\e \setminus \buildrel{\circ} \over {V}_\e$ by $\F$. This is a cylinder foliated by the leaves of $\F$ intersected with $\mathcal{C}_{\e,\e'}$. Each leaf in $\mathcal{A}_{\e,\e'}$ is transversal to all the $g$-spheres of radius $r$ with  $\e \ge r \ge \e'$. The set $\mathcal{V}_{\e,\e'}$ is the compact neighbourhood of $M \cap \mathcal{C}_{\e,\e'}$
 in $\mathcal{C}_{\e,\e'}$, obtained by saturating $V_\e$ by $\F$. Notice that the intersection of these two sets $\mathcal{C}_{\e,\e'}$, $\mathcal{V}_{\e,\e'}$ is exactly the subset obtained by saturating $\partial V_\e$.

Let us prove now  that the foliations $\F_\e$ and $\F_{\e'}$ are smoothly equivalent.  We do it in pieces, first on 
 $\mathcal{A}_{\e,\e'}$. Let  $\mathcal G_{\F}$ be the gradient flow on the leaves in $\mathcal{A}_{\e,\e'}$, defined in Section \ref{Sec: Morse}. This is a smooth flow on all of 
 $\mathcal{A}_{\e,\e'}$, with no singularities, and its flow lines are transversal to all the spheres. This determines a diffeomorphism 
 $$H: (\s_\e \setminus  \buildrel{\circ} \over {V}_\e) \longrightarrow  (\s_{\e'} \setminus  \buildrel{\circ} \over {V_{\e'}})  $$
that carries leaves of $\F_\e$ into leaves of $\F_{\e'}$. 

Notice that $H$ defines a diffeomorphism  from $\partial  V_{\e}$ into $\partial V_{\e'}$. 
We now  extend $H$  to the interior of   these neighbourhoods. Recall that a diffeomorphism is already given between $M_\e$ and $M_{\e'}$ 
and it is defined by the gradient flow $\mathcal G_{M^*}$. For the moment, denote this map  by $h$.  This is  compatible with $H$ in the sense that if $V_{\e}$ and  $V_{\e'}$ are identified with the unit disc normal bundles of $M_\e$ and $M_{\e'}$ in the corresponding spheres, then for each $\nz \in M_\e$, $H$ is a bundle map between the corresponding sphere bundles. That is, for each $\nz \in M_\e$, $H$ carries the  boundary of the normal disc $\mathbb{D}^{\mathcal{N}}_{\e, \nz}$ into the boundary of the normal disc $\mathbb{D}^{\mathcal{N}}_{\e', h(\nz)}$. Thus we denote $h$ also by $H$.

It remains to extend $H$ to the interior of the normal discs $\mathbb{D}^{\mathcal{N}}_{\e, \nz}$, $\mathbb{D}^{\mathcal{N}}_{\e', H(\nz)}$, 
and we have already defined it on $M^*$ in a compatible way. Recall that each such disc is foliated by spheres and these are 
parameterized by an arc $[\np(\L_\partial), \nz(\L_\partial)\big)$, contained in an integral line of the gradient flow on $M^*$, where $\L_\partial$ is the boundary of the disc $\mathbb{D}^{\mathcal{N}}_{\e, \nz}$, 
$\np(\L_\partial)$ is the point where the leaf of $\F$ that determines $\L_\partial$ meets $M$, and $\nz = \nz(\L_\partial)$ 
is the point in $M_\e$ whose normal fibre is determined by $\L_\partial$. Similarly, the disc $\mathbb{D}^{\mathcal{N}}_{\e', H(\nz)}$ is 
parameterized by the arc $[\np(\L_\partial), H(\nz)\big) \subset [\np(\L_\partial), \nz\big)$. 

Choose a smooth family of diffeomorphisms, parameterized by  $M_\e$, that carry each interval  $[\np(\L_\partial), \nz\big)$ 
into the corresponding interval 
$[\np(\L_\partial), H(\nz)\big)$.  This determines which sphere in each  $\mathbb{D}^{\mathcal{N}}_{\e, \nz}$ goes to which sphere 
in  $\mathbb{D}^{\mathcal{N}}_{\e', H(\nz)}$. The rest is now an exercise, using that the diffeomorphism is already given 
on the boundary of each disc.  For this we construct  integrable vector fields on $V_\e$ and $V_{\e'}$, which are tangent
 to each disc $\mathbb{D}^{\mathcal{N}}_{\e, \nz}$, $\mathbb{D}^{\mathcal{N}}_{\e', H(\nz)}$,  transversal to each leaf (sphere) 
in these discs, and singular at the  center of the corresponding disc. These vector fields can be regarded as being 
families parameterized by the points in $M_\e$ and $M_{\e'}$ respectively, of radial vector fields on each normal disc 
$\mathbb{D}^{\mathcal{N}}_{\e, \nz}$, $\mathbb{D}^{\mathcal{N}}_{\e', H(\nz)}$. These vector fields give us a natural 
way for extending the given diffeomorphisms 
$$\partial \mathbb{D}^{\mathcal{N}}_{\e, \nz} \to \partial  \mathbb{D}^{\mathcal{N}}_{\e', H(\nz)}$$
to the interior of the discs, carrying leaves of $\F_e$ into leaves of $\F_{\e'}$. 

We thus get an extension of $H$ to a diffeomorphism $(\s_\e, M_\e) \rightarrow (\s_{\e'}, M{_\e'}),$ taking  leaves of $\F_{\e}$ into
leaves of $\F_{\e'}$,

\end{proof}

\section{Codimension one foliations }\label{s: codim 1}

We now restrict the discussion to
codimension one foliations. We assume further that $\F$ has a 
  first integral near $\n0$, so there exists  a holomorphic map-germ  $(\C^n, \n0) \buildrel{f}\over{\rightarrow} (\C,0) $  such that 
 near $\n0$, the leaves of $\F$ are the fibers of $f$. By  
 Malgrange's theorem in \cite{Malgrange}, this hypothesis is a restriction only when $n=2$.  
 
 Our first task is to find a suitable set of generators for the tangent bundle $T\F$ of $\F$ away from $\n0$ (to include 
the origin we must speak of the tangent sheaf). For instance, if $n=2$, then the holomorphic vector field $F= (\frac{\partial f}{\partial z_2}, - \frac{\partial f}{\partial z_1})$ satisfies $df(F(\nz)) = 0$ for all $\nz \in \U$. 
Since $T\F$ is 1-dimensional, we have that $F$ determines  a basis for $T\F$ at all points in $\U \setminus \{\n0\}$. 
 
 For $n>2$ one has that the tangent sheaf of $\F$ may not be locally free. Thence, generally speaking, we need more than $n-1$ germs of holomorphic vectors fields to generate 
  $T\F$.  The proof of the following lemma is an exercise: 
 
 \begin{lemma}\label{generadores}
Consider the vector fields $$v_{j,k}(\nz):= \left(0, \dots,0 , \frac{\partial f}{\partial z_k}
(\nz),0, \dots,0,  -\frac{\partial f}{\partial z_j}(\nz),0, \dots,0 \right)\,,$$
where the entries  $\frac{\partial f}{\partial z_k}(\nz)$ and $ -\frac{\partial f}{\partial z_j}(\nz)$
are  in the $j$th and $k$th position respectively. Then  $\{v_{j,k}\}_{1\leq j< k\leq n}$
is a set of generators of each tangent space $T_{\nz }\L_\nz$, for all $\nz \in \U \setminus \{\n0\}$.
\end{lemma}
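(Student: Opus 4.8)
The plan is to prove this pointwise statement in two moves: first check that each $v_{j,k}$ is tangent to $\F$, and then show that at any $\nz \neq \n0$ the family already contains a basis of $T_\nz\L_\nz$. Since the leaves of $\F$ are the fibers of $f$, the relevant tangent space is the complex hyperplane $T_\nz\L_\nz = \ker df_\nz$, of complex dimension $n-1$.

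First I would verify tangency. Writing $p_i := \frac{\partial f}{\partial z_i}(\nz)$, the vector $v_{j,k}$ has only two nonzero entries, namely $p_k$ in position $j$ and $-p_j$ in position $k$. Applying the differential gives $df_\nz(v_{j,k}) = p_j\,p_k - p_k\,p_j = 0$. This is the cross-term cancellation that makes every $v_{j,k}$ lie in $\ker df_\nz = T_\nz\L_\nz$, and it is precisely the reason the fields are built this way.

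Next I would establish spanning. Fix $\nz \neq \n0$; because $\F$ has an isolated singularity at $\n0$ and its leaves are the fibers of $f$, the map $f$ is a submersion at $\nz$, so $df_\nz \neq 0$ and some partial $p_\ell \neq 0$. The key step is to isolate the $n-1$ members of the family whose index pair contains $\ell$: for each $k \neq \ell$ this is $v_{\ell,k}$ (if $\ell < k$) or $v_{k,\ell}$ (if $k < \ell$), and in either case the vector vanishes outside positions $\ell$ and $k$ and carries the entry $\pm p_\ell$ in position $k$. Deleting the $\ell$-th coordinate, these $n-1$ vectors project onto the remaining $n-1$ coordinates as $\pm p_\ell$ times the standard basis of $\C^{n-1}$; the resulting matrix is diagonal with nonzero diagonal, hence invertible, so the vectors are linearly independent over $\C$. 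Since they all lie in the $(n-1)$-dimensional space $\ker df_\nz$, they form a basis of $T_\nz\L_\nz$, and a fortiori the full family $\{v_{j,k}\}_{1 \le j < k \le n}$ generates it; running this at every $\nz \in \U \setminus \{\n0\}$ completes the argument.

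I do not expect a genuine obstacle, as this is flagged as an exercise. The only point requiring care is the index bookkeeping forced by the convention $j < k$: one must observe that whether a given pair appears as $v_{\ell,k}$ or as $v_{k,\ell}$ merely flips the sign of the relevant entry, which is immaterial for linear independence.
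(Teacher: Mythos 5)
Your proof is correct and complete; the paper itself leaves this lemma as an exercise, and your argument (cross-term cancellation $df_\nz(v_{j,k})=p_jp_k-p_kp_j=0$ for tangency, then extracting the $n-1$ vectors indexed by a nonvanishing partial $p_\ell$ and checking independence after deleting the $\ell$-th coordinate) is exactly the intended one. The only point worth making explicit is that $df_\nz\neq 0$ for $\nz\neq\n0$ because $f$ has an isolated critical point at the origin, which the paper itself invokes in the proof of Proposition \ref{M-var-analitica}.
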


 Of course this lemma is giving us too many generators, which can be unsatisfactory in some sense. Yet, these vector fields are so simple and canonical, that they lead to nice expressions for the polar variety of  $M$. This will be useful in the sequel.

As before, let  $M(\F,g)$  be  the polar variety  of $\F$ with respect to a Morse function $g$ with Morse 
index $0$ at $\n0$. We have:

\begin{proposition}\label{M-var-analitica} The variety  $ M(\F,g)$ is defined by the equations 
  \begin{equation}\label{equations M}
   \frac{\partial g}{\partial z_k} \frac{\partial f}{\partial z_j} =
\frac{\partial g}{\partial z_j} \frac{\partial f}{\partial z_k} \;,
\end{equation}
for all  $j,k $ in $\{1, \dots ,n\}$. Hence, if $g$ is analytic then  $\F$ is contact-analytic with respect to $g$.
 In particular, for  $Q(z_1,...,z_n) = |z_1|^2 +... +  |z_n|^2\;$ we obtain: $$M(\F,Q)  = 
\left\{\nz \in \C^n \left| \;  \oz_k \frac{\partial f}{\partial z_j}(\nz)  \, = \,
 \oz_j\frac{\partial f}{\partial z_k}(\nz) \; , \; 1\leq j<k\leq n\right.\right\} \;.$$
\end{proposition}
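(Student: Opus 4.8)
The plan is to combine the intrinsic description of the polar variety recorded earlier with the explicit generators of $T\F$ supplied by Lemma~\ref{generadores}, the one genuine subtlety being that one must exploit the \emph{complex} structure of the leaf tangent spaces in order to pass from a real orthogonality condition to the holomorphic equations~(\ref{equations M}).

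First I would recall that, by the displayed characterization of $M^*$ in Section~\ref{sec. definitions} together with Lemma~\ref{puntos-crit-tangentes}, a point $\nz\neq\n0$ lies in $M$ if and only if the differential $dg$ annihilates the whole tangent space $T_\nz\L_\nz$; that is, $\operatorname{Re}\langle v,\nabla g(\nz)\rangle=0$ for every $v\in T_\nz\L_\nz$. For a real-valued $g$ and a complex vector $v=(v_1,\dots,v_n)$ this directional derivative has the Wirtinger form
$$dg(v)=2\operatorname{Re}\sum_{l=1}^n \frac{\partial g}{\partial z_l}\,v_l\,,$$
since $g$ real gives $\partial g/\partial\oz_l=\overline{\partial g/\partial z_l}$. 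By Lemma~\ref{generadores} the space $T_\nz\L_\nz$ is spanned \emph{over $\C$} by the fields $v_{j,k}$, $1\le j<k\le n$. Because $T_\nz\L_\nz$ is a complex subspace it also contains $iv_{j,k}$, so the vanishing of $dg$ on $T_\nz\L_\nz$ is equivalent to the scalar conditions $dg(v_{j,k})=0$ and $dg(iv_{j,k})=0$ for all $j<k$.

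The computation is then immediate. Writing $\Delta_{jk}:=\frac{\partial g}{\partial z_j}\frac{\partial f}{\partial z_k}-\frac{\partial g}{\partial z_k}\frac{\partial f}{\partial z_j}$ and substituting the two nonzero components of $v_{j,k}$ into the Wirtinger formula gives $dg(v_{j,k})=2\operatorname{Re}\,\Delta_{jk}$, while $dg(iv_{j,k})=-2\operatorname{Im}\,\Delta_{jk}$. Hence both vanish precisely when $\Delta_{jk}=0$, i.e.\ when $\frac{\partial g}{\partial z_k}\frac{\partial f}{\partial z_j}=\frac{\partial g}{\partial z_j}\frac{\partial f}{\partial z_k}$, which is~(\ref{equations M}). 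Since $\Delta_{jk}$ is antisymmetric in $j,k$ and vanishes identically for $j=k$, the equations hold for all pairs $j,k\in\{1,\dots,n\}$; at $\n0$ they are satisfied trivially because both $\nabla f$ and $\nabla g$ vanish there, so~(\ref{equations M}) cuts out all of $M$.

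Finally, when $g$ is analytic each $\Delta_{jk}$ is a real-analytic function of $(\nz,\oz)$, so the germ of $M$ at $\n0$ is real analytic and $\F$ is contact-analytic with respect to $g$. Specializing to $Q=|z_1|^2+\cdots+|z_n|^2$, for which $\partial Q/\partial z_j=\oz_j$, turns~(\ref{equations M}) into $\oz_k\frac{\partial f}{\partial z_j}=\oz_j\frac{\partial f}{\partial z_k}$, as asserted. The step I expect to be the crux is precisely the use of the complex structure: testing $dg$ only on the real generators $v_{j,k}$ would yield merely $\operatorname{Re}\,\Delta_{jk}=0$, and it is exactly the extra conditions coming from $iv_{j,k}$ that upgrade these real relations to the holomorphic equations~(\ref{equations M}). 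Omitting them would produce a strictly larger, and incorrect, defining set.
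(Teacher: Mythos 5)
Your proof is correct, but it follows a different route from the paper's. The paper's argument is coordinate-based: at a point where $\frac{\partial f}{\partial z_j}\neq 0$ it invokes the implicit function theorem to write $z_j$ as a holomorphic function of the remaining variables on the leaf, differentiates $f=c$ implicitly to get $\frac{\partial z_j}{\partial z_k}=-\frac{\partial f/\partial z_k}{\partial f/\partial z_j}$, and then reads off the critical-point equations of $g_\L$ directly. You instead combine the intrinsic description of $M^*$ as the locus where $dg$ annihilates $T_\nz\L_\nz$ with the generators $v_{j,k}$ of Lemma~\ref{generadores} and the Wirtinger identity $dg(v)=2\operatorname{Re}\sum_l\frac{\partial g}{\partial z_l}v_l$, obtaining $\operatorname{Re}\Delta_{jk}=0$ and $\operatorname{Im}\Delta_{jk}=0$ from $v_{j,k}$ and $iv_{j,k}$ respectively. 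Your version has two advantages: it actually puts Lemma~\ref{generadores} to work (the paper motivates that lemma as leading to ``nice expressions for the polar variety'' but its written proof of the proposition does not use it), and it makes explicit the point you correctly flag as the crux --- that one must test $dg$ on $iv_{j,k}$ as well as $v_{j,k}$ to upgrade the real relation $\operatorname{Re}\Delta_{jk}=0$ to the full equation $\Delta_{jk}=0$; in the paper's approach this step is hidden in the fact that for real $g$ the $\oz_k$-derivative of $g_\L$ yields the conjugate equation. The paper's version, in exchange, avoids any appeal to the (unproved, exercise-level) generation statement and stays entirely within elementary calculus on a local graph parametrization. Both arguments are complete and yield the same equations~(\ref{equations M}).
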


\begin{proof}
Notice that the last two statements in \ref{M-var-analitica}  follow from the equations (\ref{equations M}), so we only need to show that these equations actually define $M$.
Since $f$ has an isolated critical point at $\n0$, given a point $\nz$ in  
$\C^n\backslash \{\n0\}$ one has  $\frac{\partial f}{\partial z_j}(\nz)\neq 0$ for some $j$. Let  $\L$ be the leaf of $\F$
containing $\nz$. 
 Then the implicit function theorem says that there is a neigborhood $U'$ of $\nz$  
where, on the leaf $\L$, the coordinate function  $z_j$ depends holomorphically on the other variables.
Deriving  implicitly the equation $f(\nz)= c$  we find:
$$ \frac{\partial z_j}{\partial z_k} = 
-\frac{ \frac{\partial f}{\partial z_k}}{\frac{\partial f}{\partial z_j}}$$
at all points in $U'\cap \L$. Using this to find the critical points of $g_\L$ we obtain the desired equations by a straight-forward computation.
\end{proof}

We know from 
 the examples in  \ref{pham-brieskorn} that there are   foliations that have 
degenerate contact points for the function $Q(x_1,y_1,x_2,y_2)=  x_1^2 +  y_1^2 + x_2^2 +  y_2^2$, as for instance the 
foliation defined by  the polynomial map $f(z_1, z_2)= z_1^2 + z_2^2$.  
Yet, in these example one can easily destroy the degenerate contacts. 
For instance:

\begin{examples}
 Assume $Q_\Lambda(x_1,y_1,x_2,y_2)= a_1 x_1^2 + b_1 y_1^2 + a_2x_2^2 + b_2 y_2^2$ where $a_1, b_1, a_2, b_2$ are
positive real numbers, and consider the foliation $\mathcal{F}$ defined by
the function $f(z_1,z_2)= z_1^2+ z_2^2$. The polar  variety is the set defined by $$M= \left\{(z_1, z_2)\, \in \,\mathbb{C}^2 \,| \;
z_2[a_1 Re \, z_1
- i b_1 Im \,  z_1] = z_1[a_2 Re \,  z_2 - i b_2 Im \,  z_2] \right\}\,.$$ If we have  $a_1> a_2$ and $b_1 > b_2$, then the polar  variety is formed only by the two  coordinate
axis, all contacts are non degenerate  and
each leaf has two saddle points on the  $z_1$-axe and two minimal points on the $z_2$-axe. Analogous statements hold for $a_1< a_2$ and $b_1 < b_2$.
\end{examples}

Notice that in this example the foliation $\F$ is determined by the linear form $df = 2z_1dz_1 + 2z_2dz_2$. 
In  \cite{Ito-Scardua},   T. Ito and B. Scardua  prove:

  \medskip
  \noindent
{\bf Theorem} [Ito-Scardua].\label{ito-scardua}
 {\it Every linear foliation $\F$ on $\C^n$ can be arbitrarily approximated by 
foliations that carry  the Morse structure of $Q$.}
  \medskip
  
  Let us consider now homogeneous foliations of degree more than 1.  This means, by Malgrange's theorem, that the foliation
  $\H_k$ is defined by the differential of some homogeneous polynomial of degree more than 2.
  The simplest case is:

\begin{theorem}\label{Fermat}
 Let $H^n_k$ be the  Fermat   polynomial 
 $\lambda_1 z_1^k + \lambda_2 z_2^k + \dots +\lambda_n z_n^k$.
 If $k>2$, then the  foliation $\H_k$ defined by the fibers of $H_k^n$ has a  Morse structure compatible with
the quadratic form $Q= |z_1|^2 +   |z_2|^2 + \cdots + |z_n|^2 $, and the corresponding polar variety consists of a finite number of complex lines through the origin.
 \end{theorem}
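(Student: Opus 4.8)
The plan is to work directly with the explicit equations for the polar variety furnished by Proposition \ref{M-var-analitica}. For $f = H^n_k$ we have $\frac{\partial f}{\partial z_j} = k\lambda_j z_j^{k-1}$, so $M = M(\H_k, Q)$ is cut out by
$$\lambda_j z_j^{k-1}\,\overline{z}_k \;=\; \lambda_k z_k^{k-1}\,\overline{z}_j, \qquad 1 \le j < k \le n.$$
First I would note that this system is invariant under the action $\nz \mapsto t\nz$, $t \in \C^*$: both sides are multiplied by $t^{k-1}\overline{t}$, so the equations are unchanged. Hence $M$ is a cone and $M^* = M\setminus\{\n0\}$ is a union of punctured complex lines $\C^*\nv$ through the origin. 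On the stratum where all coordinates are nonzero the system is equivalent to the chain $w_1 = \cdots = w_n$ with $w_i := \lambda_i z_i^{k-1}/\overline{z}_i$; writing the common value as $w$, multiplying $\lambda_i z_i^{k-1} = w\,\overline{z}_i$ by $z_i$ and summing yields the identity $f(\nz) = w\,Q(\nz)$ on $M^*$, with $w\neq 0$ (and likewise on each lower stratum, summing only over the nonzero coordinates). I will record this identity, as it drives the transversality step.

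Next I would prove finiteness of the set of lines by passing to polar coordinates $z_i = r_i e^{i\theta_i}$, $\lambda_i = |\lambda_i|e^{i\alpha_i}$. The condition $w_i = w_j$ splits into the modulus relation $|\lambda_i|r_i^{k-2} = |\lambda_j|r_j^{k-2}$ and the argument relation $\alpha_i + k\theta_i \equiv \alpha_j + k\theta_j \pmod{2\pi}$. On a stratum with support $T$, $|T|=m$, normalizing the common value $w$ to $1$ fixes each $r_i$ ($i\in T$) and leaves exactly $k$ admissible values for each $\theta_i$ (from $k\theta_i \equiv -\alpha_i$), giving $k^m$ normalized points; since a single line $\C^*\nv$ of $M^*$ meets $\{w=1\}$ in exactly $k$ points (the stabilizer being the $k$-th roots of unity acting diagonally on the $\theta_i$), this stratum contributes $k^{m-1}$ lines. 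Summing $\sum_{T} k^{|T|-1}$ over the finitely many supports $T \subseteq \{1,\dots,n\}$ gives a finite total, which establishes that $M$ is a finite union of complex lines through $\n0$ (each coordinate axis, the case $|T|=1$, being among them).

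Finally I would obtain the Morse structure through Theorem \ref{Theorem 1}, by checking that $M^*$ is a reduced submanifold of codimension $2d = 2(n-1)$ transversal to $\H_k$ and invoking Lemma \ref{M-subvariedad=Morse}. Fixing $\nz_0 \in M^*$ and an index $k_0$ with $z_{k_0}\neq 0$, I would use the $n-1$ functions $h_{k_0,j}(\nz) = \lambda_{k_0}z_{k_0}^{k-1}\overline{z}_j - \lambda_j z_j^{k-1}\overline{z}_{k_0}$ as local defining equations and compute their differential blockwise: on the $z_j$-plane it has the real form $\xi \mapsto B_j\xi + A_j\overline{\xi}$, whose real determinant is $|B_j|^2 - |A_j|^2$. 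Using the modulus relation $|\lambda_j||z_j|^{k-2} = |\lambda_{k_0}||z_{k_0}|^{k-2}$ valid on $M$, this determinant reduces to $k(k-2)\,|\lambda_{k_0}|^2|z_{k_0}|^{2(k-1)}$, strictly positive precisely because $k>2$ (for the indices $j$ with $z_j=0$ the block degenerates to $\xi\mapsto A_j\overline{\xi}$, still invertible). Thus the differential is a block-diagonal isomorphism of rank $2(n-1)$, so $M^*$ is reduced of codimension $2d$ with tangent space the line direction $\C\nv$; transversality is then immediate, since the leaf tangent space is $\ker df$ and $df_{\nz_0}(\nv)\neq 0$, being by homogeneity a nonzero multiple of $f(\nz_0) = w\,Q(\nz_0)\neq 0$. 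I expect the main obstacle to be exactly this rank computation: one must confirm that the factor $k(k-2)$ never vanishes along $M^*$ and that the chosen equations work uniformly on every coordinate stratum. It is here that the hypothesis $k>2$ is indispensable, for at $k=2$ the determinant collapses, matching the degenerate contacts of $z_1^2+\cdots$ observed in Examples \ref{pham-brieskorn}.
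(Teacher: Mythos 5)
Your proof is correct, and while its overall architecture (establish smoothness and reducedness of $M^*$, establish transversality, invoke Theorem \ref{Theorem 1}, then identify $M$ as a finite union of complex lines) mirrors the paper's, two of the three main steps are carried out by genuinely different means. The smoothness step is essentially the paper's Proposition \ref{p:homogeneous-smooth}: your blockwise determinant $|B_j|^2-|A_j|^2 = k(k-2)\,|\lambda_{k_0}|^2|z_{k_0}|^{2(k-1)}$ is the same quantity (up to sign and labelling) as the factors $|d|^2-|c_j|^2 = |\lambda_1|^2|z_1|^{2(k-1)}(2k-k^2)$ appearing in the paper's inductive evaluation of $\det B_n$, and both arguments handle the degenerate coordinates $z_j=0$ identically. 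Where you diverge is in the transversality step and the line count. The paper proves transversality (Proposition \ref{transversal}) by an explicit computation of $\langle v_j, grad\,\psi_j\rangle_\R$ and $\langle v_j, grad\,\phi_j\rangle_\R$ in polar form, arriving at the factor $-1\pm(k-1)$; you instead observe that $T_{\nz_0}M^*$ is the complex line $\C\nv$ itself (legitimate once smoothness gives $\dim_\R M^*=2$, since the punctured line is then open in $M^*$), and reduce transversality to $df_{\nz_0}(\nv)\neq 0$, which follows from Euler's identity together with your identity $f=w\,Q$ on $M^*$. This is shorter and more conceptual, and the identity $f=wQ$ is a genuinely useful observation not recorded in the paper. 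For the final claim, the paper only notes that the defining polynomials are invariant under complex scaling, so that the $2$-dimensional analytic set $M$ is a cone over finitely many lines; your stratified enumeration in polar coordinates actually produces the count $\sum_T k^{|T|-1}$ of lines, which is more information, at the cost of a longer argument. The one point you pass over in silence (as does the paper) is that the $n-1$ equations $h_{k_0,j}=0$ suffice to define $M$ locally where $z_{k_0}\neq 0$, i.e.\ that the remaining equations $h_{j,l}=0$ follow from these; this is a one-line verification and not a gap of substance.
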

 
That $\H_k$  has a  Morse structure compatible with
the quadratic form $Q$ is an immediate consequence of Theorem 1 and the  two propositions below.  Notice that the claim that $M$ is a union of complex lines was  proved in \cite{Ito-Scardua} for linear foliations

\begin{proposition}\label{p:homogeneous-smooth}
Let $H^n_k=\lambda_1 z_1^k + \lambda_2 z_2^k + \dots +\lambda_n z_n^k$.
 If $k>2$ then away from $\n0$, the polar  variety $M= M(\mathcal{H}_k, Q)$ is a differentiable reduced submanifold of $\mathbb{R}^{2n}$, of dimension 2.
\end{proposition}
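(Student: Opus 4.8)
The plan is to show directly that the defining equations of $M$ cut out a regular (reduced) level set of codimension $2d=2(n-1)$ along $M^*$; by Lemma~\ref{M-subvariedad} the word ``reduced'' here means precisely that the Jacobian of these equations attains its maximal rank $2(n-1)$ at every point of $M^*$.

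First I would record the equations. Since $\partial H^n_k/\partial z_p = k\l_p z_p^{k-1}$, Proposition~\ref{M-var-analitica} presents $M=M(\H_k,Q)$ as the common zero set of the $\binom n2$ complex functions
\[
G_{pq}(\nz):=\l_p z_p^{k-1}\overline{z}_q-\l_q z_q^{k-1}\overline{z}_p\,,\qquad 1\le p<q\le n\,.
\]
Two preliminary observations guide everything. Each $G_{pq}$ is weighted homogeneous, $G_{pq}(t\nz)=t^{k-1}\bar t\,G_{pq}(\nz)$, so $M$ is invariant under the scaling $\nz\mapsto t\nz$; hence through every $\nz\in M^*$ the whole complex line $\ell=\C\nz$ lies in $M$, giving a smooth real $2$-dimensional subset through each point. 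It therefore suffices to prove that the real differentials of the $G_{pq}$ and their conjugates span a space of dimension exactly $2(n-1)$ at each $\nz\in M^*$, with kernel $T_\nz\ell=\C\nz$; the containment $\ell\subseteq M$ then forces $M^*$ to coincide locally with this smooth codimension-$2(n-1)$ level set.

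The computation I would organize according to the support $S=\{p:z_p\neq 0\}$ of a fixed point $\nz\in M^*$. For $q\notin S$ and any $p\in S$ one finds, using $k>2$ so that $z_q^{k-2}=z_q^{k-1}=\overline{z}_q=0$, that $dG_{pq}=\l_p z_p^{k-1}\,d\overline{z}_q$; together with the conjugate equations these contribute exactly the $2(n-|S|)$ independent covectors $\{dz_q,d\overline{z}_q:q\notin S\}$, so any kernel vector has vanishing components outside $S$. For $p,q\in S$ I would substitute a tangent vector with components $v_p=w_p z_p$ and use the defining relation $\l_p z_p^{k-1}\overline{z}_q=\l_q z_q^{k-1}\overline{z}_p=:P_{pq}\neq 0$ (nonzero because $p,q\in S$). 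A short simplification collapses the kernel equation $dG_{pq}(\nv)=0$ to
\[
P_{pq}\big[(k-1)\delta_{pq}-\overline{\delta_{pq}}\big]=0\,,\qquad \delta_{pq}:=w_p-w_q\,.
\]

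The main point — and the place where the hypothesis $k>2$ is indispensable — is the elementary fact that, writing $\delta=a+bi$, the equation $(k-1)\delta-\bar\delta=0$ becomes $(k-2)a+kbi=0$, which for $k>2$ forces $\delta=0$. Hence $w_p=w_q$ for all $p,q\in S$, i.e. the $S$-components of any kernel vector are a common complex multiple of $\nz|_S$. Combining the two cases, the common kernel is exactly $\C\nz$, the Jacobian has maximal real rank $2(n-|S|)+(2|S|-2)=2(n-1)=2d$ at $\nz$, and $M^*$ is a reduced smooth submanifold of $\R^{2n}$ of codimension $2(n-1)$, that is, of real dimension $2$. (For $k=2$ the same computation leaves $a$ free, producing an extra kernel direction; this is precisely the degenerate behaviour seen for $f=z_1^2+z_2^2$ in Example~\ref{pham-brieskorn}, and it shows the hypothesis cannot be weakened.)
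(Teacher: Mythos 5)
Your argument is correct, and it reaches the conclusion by a genuinely different route than the paper. The paper fixes an index with $z_1\neq 0$, keeps only the $n-1$ equations $G_j=\lambda_1 z_1^{k-1}\oz_j-\lambda_j z_j^{k-1}\oz_1$, writes out the full $2(n-1)\times 2n$ real Jacobian, and then proves by induction that the square submatrix obtained by deleting the two columns corresponding to $(z_1,\oz_1)$ has determinant $(2i)^{n-1}\prod_{j}\bigl(|d|^2-|c_j|^2\bigr)$; the relation $|\lambda_1||z_1|^{k-2}=|\lambda_j||z_j|^{k-2}$, valid on $M^*$, turns each factor into a nonzero multiple of $2k-k^2$, which vanishes exactly for $k=0,2$. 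You instead keep all $\binom{n}{2}$ equations $G_{pq}$, compute the common kernel of their real differentials via the substitution $v_p=w_pz_p$ on the support $S$ of $\nz$, and reduce everything to the scalar identity $(k-1)\delta-\bar\delta=(k-2)a+kbi$, which isolates the role of the hypothesis $k>2$ about as cleanly as possible; the scaling invariance $G_{pq}(t\nz)=t^{k-1}\bar t\,G_{pq}(\nz)$ then supplies the complex line $\C\nz\subseteq M$ that squeezes $M^*$ against the regular level set. Your version is coordinate-symmetric (no privileged index), identifies $T_\nz M^*=\C\nz$ along the way — which immediately gives the claim, proved separately at the end of the paper's Section 5, that $M$ is a union of complex lines through $\n0$ — and explains the failure at $k=2$ seen in Example \ref{pham-brieskorn}. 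What the paper's computation buys in exchange is an explicit nonvanishing $2(n-1)\times 2(n-1)$ minor, i.e.\ a concrete choice of $2(n-1)$ defining functions exhibiting $M^*$ as a regular level set. One small point worth making explicit in your write-up: for pairs $p,q\notin S$ the differential $dG_{pq}$ vanishes identically at $\nz$ (again using $k>2$), so these equations contribute nothing to the rank; this is harmless for your count but should be recorded so that the tally $2(n-|S|)+(2|S|-2)=2(n-1)$ visibly exhausts all the constraints.
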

\begin{proof}
 
For simplicity  assume $z_1 \neq 0$. In this case, the set of vectors 
\begin{eqnarray*}
  v_2 &=& (\lambda_2 z_2^{k-1}, -\lambda_1 z_1^{k-1}, 0, 0,\dots, 0)\\
v_3 &=& (\lambda_3 z_3^{k-1},0, -\lambda_1 z_1^{k-1}, 0, \dots, 0)\\
\vdots && \vdots \\
v_n &=& (\lambda_n z_n^{k-1},0, 0, \dots, 0, -\lambda_1 z_1^{k-1})
\end{eqnarray*}
forms a basis of $T_\nz\L_\nz$. Then the polar  variety is defined by the equations
\begin{eqnarray}\label{ec-contacto-homogeneo}
 G_j(\nz):= \lambda_1 z_1^{k-1} \oz_j- \lambda_j z_j^{k-1} \oz_1 =0
\end{eqnarray}
 for all  $j= 2, \dots, n$. In other words, 
 $M$  is the zero set of the real equations:
$$\psi_j (\uz) := 2Re\, G_j(\uz) \quad \hbox{and} \quad  \phi_j(\uz):= 2 Im \,G_j(\uz)$$ for all $j= 2, \dots, n$. 
We set  $\Phi:=(\psi_2, \phi_2, \dots, \psi_n, \phi_n)$.
Notice one has: 
\begin{displaymath}
  \frac{\partial \psi_j}{\partial z_l} = \left\{ \begin{array}{ccc}
                      \lambda_1 (k-1) z_1^{k-2} \oz_j  - \overline{\lambda}_j \oz_j^{k-1} & &     l=1\\
               - \lambda_j (k-1) z_j^{k-2} \oz_1 + \overline{\lambda}_1 \oz_1^{k-1}& & l = j\\
		  0 &  & l\neq j,1
                                                   \end{array}
\right.
\end{displaymath}
\begin{displaymath}
  \frac{\partial \psi_j}{\partial \oz_l} = \left\{ \begin{array}{ccc}
                      \overline{\lambda}_1(k-1) \oz_1^{k-2} z_j  - \lambda_j z_j^{k-1} & &     l=1\\
               - \overline{\lambda}_j (k-1) \oz_j^{k-2} z_1 + \lambda_1 z_1^{k-1}& ¡ & l = j\\
		  0 &  & l\neq j,1
                                                   \end{array}
\right.
\end{displaymath}
\begin{displaymath}
  \frac{\partial \phi_j}{\partial z_l} = \left\{ \begin{array}{ccc}
                     i[ \lambda_1 (k-1) z_1^{k-2} \oz_j  + \overline{\lambda}_j \oz_j^{k-1}] & &     l=1\\
                i[-\lambda_j (k-1) z_j^{k-2} \oz_1 - \overline{\lambda}_1 \oz_1^{k-1}]& & l = j\\
		  0 &  & l\neq j,1
                                                   \end{array}
\right.
\end{displaymath}
\begin{displaymath}
  \frac{\partial \phi_j}{\partial \oz_l} = \left\{ \begin{array}{ccc}
                     i[- \overline{\lambda}_1(k-1) \oz_1^{k-2} z_j  - \lambda_j z_j^{k-1}] & &     l=1\\
               i[ \overline{\lambda}_j (k-1) \oz_j^{k-2} z_1 + \lambda_1 z_1^{k-1}]&  & l = j\\
		  0 &  & l\neq j,1
                                                   \end{array}
\right.
\end{displaymath}
Set  $a_j(\nz):= \lambda_1 (k-1) z_1^{k-2} \oz_j $, $b_j(\nz):= \lambda_j z_j^{k-1} $,
$c_j(\nz) := \lambda_j (k-1) z_j^{k-2} \oz_1$ and $d(\nz):= \lambda_1 z_1^{k-1} $. Then the jacobian matrix
 $D \Phi$ is (we omit writing the variable $\nz$ for simplicity):
\begin{displaymath}
\left(\begin{array}{cccccccc}
a_2 - \overline{b}_2 & -b_2 + \overline{a}_2 & -c_2 + \overline{d} & d - \overline{c}_2  & \dots & 0 & 0 \\
i[a_2 + \overline{b}_2] & i[-b_2 - \overline{a}_2] & i[-c_2 - \overline{d}] & i[d + \overline{c}_2] & \dots & 0 & 0 \\
a_3 - \overline{b}_3 & -b_3 + \overline{a}_3 & 0 & 0& \dots & 0 & 0 \\
i[a_3 + \overline{b}_3] & i[-b_3 - \overline{a}_3] & 0 &0  & \dots & 0 & 0 \\
\vdots & \vdots&   \vdots & \vdots &\ddots  & \vdots & \vdots \\
a_n - \overline{b}_n & -b_n + \overline{a}_n & 0 & 0  & \dots & -c_n + \overline{d} & d - \overline{c}_n   \\
i[a_n + \overline{b}_n] & i[-b_n - \overline{a}_n] & 0 & 0  & \dots & i[-c_n - \overline{d}] &i[ d + \overline{c}_n] \\
               \end{array}
 \right)
\end{displaymath}
To prove that  $D\Phi$ has rank $2n-2$ consider its submatrix $B_n$ obtained by eliminating the first two columns
of $D\Phi$.   Then Proposition \ref{p:homogeneous-smooth} follows from the 
 following lemma.
 \end{proof}

\begin{lemma}
 Let $B_n$ be as above. Then: 
 $$det (B_n(\nz)) \, = \,  (2i)^{n-1}\prod_{j=2}^{n}[|d(\nz)|^2 - |c_j(\nz)|^2] \,=\, (2i)^{n-1}\rho (\nz) [2k-k^2]^{m}\,,
 $$
for some positive integers $\rho (\nz)$, $m$.  Hence the matrix $B_n(\nz)$ is invertible when $k>2$.
\end{lemma}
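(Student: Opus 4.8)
The plan is to exploit the sparsity of $\Phi=(\psi_2,\phi_2,\dots,\psi_n,\phi_n)$: each pair $\psi_j,\phi_j$ depends only on the four real coordinates coming from $z_1$ and $z_j$. Hence, once the two columns corresponding to $\partial/\partial z_1$ and $\partial/\partial\bar z_1$ are deleted, in the rows of $\psi_j$ and $\phi_j$ the only surviving nonzero entries lie in the two columns $\partial/\partial z_j$ and $\partial/\partial\bar z_j$. First I would record this to conclude that $B_n$ is block diagonal, with $n-1$ diagonal blocks of size $2\times 2$, the $j$-th block being
\begin{displaymath}
B_n^{(j)}=\begin{pmatrix} \bar d-c_j & d-\bar c_j\\ -i(c_j+\bar d) & i(d+\bar c_j)\end{pmatrix},
\end{displaymath}
read off directly from the four partial derivatives $\partial\psi_j/\partial z_j$, $\partial\psi_j/\partial\bar z_j$, $\partial\phi_j/\partial z_j$, $\partial\phi_j/\partial\bar z_j$ displayed above.

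Next I would compute $\det B_n$ as the product of the $\det B_n^{(j)}$. Expanding each $2\times 2$ determinant and using that the cross terms involving $c_j d$ and $\bar c_j\bar d$ cancel in pairs, one finds $\det B_n^{(j)}=2i\,(|d|^2-|c_j|^2)$. Multiplying over $j=2,\dots,n$ gives the first claimed equality
\begin{displaymath}
\det B_n(\nz)=(2i)^{n-1}\prod_{j=2}^n\bigl(|d(\nz)|^2-|c_j(\nz)|^2\bigr),
\end{displaymath}
which is pure linear algebra and should be routine.

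The substance is the second equality, valid on $M$, where the defining equations $G_j(\nz)=\lambda_1 z_1^{k-1}\bar z_j-\lambda_j z_j^{k-1}\bar z_1=0$ enter. Working in the chart $z_1\neq 0$, I would take moduli in $G_j=0$ to get $|\lambda_1||z_1|^{k-1}|z_j|=|\lambda_j||z_j|^{k-1}|z_1|$. If $z_j\neq 0$, dividing by $|z_j|$ yields $|d|=|\lambda_1||z_1|^{k-1}=|\lambda_j||z_j|^{k-2}|z_1|$, hence $|c_j|=(k-1)|\lambda_j||z_j|^{k-2}|z_1|=(k-1)|d|$ and therefore
\begin{displaymath}
|d|^2-|c_j|^2=|d|^2\bigl(1-(k-1)^2\bigr)=|d|^2\,(2k-k^2).
\end{displaymath}
If instead $z_j=0$, then $c_j=\lambda_j(k-1)z_j^{k-2}\bar z_1=0$ (here $k>2$ forces $k-2\ge 1$), so the factor is just $|d|^2$. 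Letting $m$ be the number of indices $j\in\{2,\dots,n\}$ with $z_j\neq 0$, the product collapses to $|d|^{2(n-1)}(2k-k^2)^m$, so that $\det B_n=(2i)^{n-1}\rho(\nz)[2k-k^2]^m$ with $\rho(\nz)=|d(\nz)|^{2(n-1)}$. Since $z_1\neq 0$ we have $\rho(\nz)>0$, and $2k-k^2=-k(k-2)\neq 0$ precisely because $k>2$; hence $\det B_n\neq 0$ and $B_n$ is invertible.

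The only delicate point is the bookkeeping in this last step: the factor $|d|^2-|c_j|^2$ genuinely takes two different shapes according to whether $z_j$ vanishes, so one must split the product over the two sets of indices and verify that the $z_j=0$ factors feed only into $\rho$. I would also note that, strictly, $\rho(\nz)=|d(\nz)|^{2(n-1)}$ is a positive real-analytic function rather than an integer, and $m$ is the (stratum-wise constant) count of nonzero coordinates; what the conclusion actually needs is only that both $\rho(\nz)$ and $(2k-k^2)^m$ are nonzero. The block structure that diagonalizes $B_n$ is the real engine of the argument, and everything after it is evaluation on $M$.
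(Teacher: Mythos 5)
Your proof is correct and follows essentially the same route as the paper: the paper computes $\det B_n$ by induction, expanding along the last two columns of the visibly block-diagonal matrix, which is just a formal version of your direct product of the $2\times 2$ block determinants $2i\bigl(|d|^2-|c_j|^2\bigr)$, and the evaluation on $M$ via $|\lambda_1||z_1|^{k-2}=|\lambda_j||z_j|^{k-2}$, split according to whether $z_j$ vanishes, is identical. Your closing remark that $\rho(\nz)$ is really a positive quantity rather than a positive integer, and that only its nonvanishing matters, is a fair correction of the statement's wording.
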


\begin{proof}
 The proof is by induction.
Assume $n=2$, then the determinant of $B_2$ is
\begin{displaymath}
 det (B_2) = i\left|\begin{array}{cc}
-c_2 + \overline{d} &  d -  \overline{c}_2\\
- c_2 - \overline{d} & d + \overline{c}_2
                   \end{array}
 \right| 
= 2i |d|^2 - 2i |c_2|^2 \,.
\end{displaymath}
Now we assume that every matrix $B_{n-1}$  as above, in  $n-1$ variables,  satisfies 
$det (B_{n-1})= (2i)^{n-2}\prod_{j=2}^{n-1}[|d|^2 - |c_j|^2]$, and  we let $B_n$ be
\begin{displaymath}
 B_n =\left(\begin{array}{cccccccc}
 -c_2 + \overline{d} & d - \overline{c}_2 &0 &0 & \dots & 0 & 0 \\
 -ic_2 - i\overline{d} & id + i\overline{c}_2 &0&0& \dots & 0 & 0 \\
0 & 0& -c_3 + \overline{d} & d - \overline{c}_3  & \dots & 0 & 0 \\
 0 &0 & -ic_3 -i\overline{d} & id + i\overline{c}_3 & \dots & 0 & 0 \\
  \vdots  & \vdots& \vdots & \vdots &\ddots  & \vdots & \vdots \\
 0 & 0 & 0 & 0 & \dots & -c_n + \overline{d} & d - \overline{c}_n   \\
 0 &0   & 0 & 0  & \dots & -ic_n - i\overline{d} & id + i\overline{c}_n \\
               \end{array}
 \right)
\end{displaymath}
Using the last two columns to evaluate  the determinant we get:
$$ det(B_n)=2i[|d|^2 - |c_n|^2] det(B_{n-1})\,,$$
where
\begin{displaymath}
 B_{n-1} =\left(\begin{array}{cccccccc}
 -c_2 + \overline{d} & d - \overline{c}_2 &0 &0 & \dots & 0 & 0 \\
 -ic_2 - i\overline{d} &i d + i\overline{c}_2 &0&0& \dots & 0 & 0 \\
0 & 0& -c_3 + \overline{d} & d - \overline{c}_3  & \dots & 0 & 0 \\
 0 &0 & -ic_3 - i\overline{d} & id + i\overline{c}_3 & \dots & 0 & 0 \\
  \vdots  & \vdots& \vdots & \vdots &\ddots  & \vdots & \vdots \\
 0 & 0 & 0 & 0 & \dots & -c_{n-1} + \overline{d} & d - \overline{c}_{n-1}   \\
 0 &0   & 0 & 0  & \dots & -ic_{n-1} - i\overline{d} & id + i\overline{c}_{n-1} \\
               \end{array}
 \right)
\end{displaymath}
Then the induction hypothesis implies,
 $$det(B_n)=2i[|d|^2 - |c_n|^2](2i)^{n-2}\prod_{j=2}^{n-1}[|d|^2 - |c_j|^2]
= (2i)^{n-1}\prod_{j=2}^{n}[|d|^2 - |c_j|^2] \,,$$
proving the first statement in the lemma. 

Now choose the indices $\{1,\cdots,n\}$ appropriately, so that 
 $z_1, z_2, \dots,z_m \neq 0$ and $z_{m+1}= \dots= z_{n}=0$, for some $m \ge 1$. Notice that $c_j(\nz)= 0$ for all
 $j> m$, and $d(\nz) \neq 0$, so we have:
$$det(B_n)=(2i)^{n-1} |d|^{2(n-m)}\prod_{j=2}^{m}[|d|^2 - |c_j|^2] \,.$$
Recall the points of $M$ satisfy the equation \ref{ec-contacto-homogeneo}, that can be seen in norms as
 $|\lambda_1| |z_1|^{k-2} = |\lambda_j| | z_j|^{k-2}$ for all $j$. Hence:
\begin{eqnarray*}
|d|^2 - |c_j|^2 
&=&|\lambda_1|^2| z_1|^{2(k-1)}-(k-1)^2 | \lambda_1|^2  |z_1|^{2(k-2)} |z_1|^2\\
&=& |\lambda_1|^2| z_1|^{2(k-1)}(1 - (k-1)^2) \,.
\end{eqnarray*}
Therefore  the determinant of $B_n$  takes the form
\begin{eqnarray*}
 det( B_n)&=& (2i)^{n-1} (|\lambda_1| |z_1|^{k-1})^{2(n-m)}\prod_{j=2}^{m}|\lambda_1|^2| z_1|^{2(k-1)}(2k-k^2)\\
&=& (2i)^{n-1} |\lambda_1|^{2n}|z_1|^{2(k-1)n} [2k-k^2]^{m}
\end{eqnarray*}
completing the proof of  the lemma.
\end{proof}

\begin{proposition}\label{transversal}
 Let $H_k^n= \lambda_1 z_1^k + \lambda_2 z_2^k + \dots +\lambda_n z_n^k$ and $k>2$, then the intersection of 
$\mathcal{H}_k$ and $M^*$ is transversal everywhere.
\end{proposition}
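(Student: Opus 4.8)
The plan is to combine the scaling symmetry of the Fermat polynomial with the smoothness of $M^*$ established in Proposition \ref{p:homogeneous-smooth} in order to compute the tangent space $T_\nz M^*$ exactly, and then to reduce transversality to a single non-vanishing statement about $H_k^n$ on $M^*$.

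First I would note that $M$ is invariant under the complex scaling $\nz \mapsto \zeta\nz$, $\zeta \in \C^*$: the defining functions $G_j(\nz) = \lambda_1 z_1^{k-1}\oz_j - \lambda_j z_j^{k-1}\oz_1$ transform as $G_j(\zeta\nz) = \zeta^{k-1}\bar\zeta\,G_j(\nz)$, so $G_j(\nz)=0$ implies $G_j(\zeta\nz)=0$. Hence for every $\nz \in M^*$ the punctured complex line $\{\zeta\nz : \zeta\in\C^*\}$ is contained in $M^*$, and its tangent space at $\nz$ is the real $2$-plane $\C\nz = \mathrm{span}_\R\{\nz,i\nz\}$. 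Since $M^*$ is a smooth submanifold of real dimension $2$ by Proposition \ref{p:homogeneous-smooth} and this $2$-plane is contained in $T_\nz M^*$, the two coincide: $T_\nz M^* = \C\nz$. (In passing this shows each component of $M^*$ is a punctured complex line, which is the last assertion of Theorem \ref{Fermat}.)

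Next I would use that $\H_k$ is the foliation by the fibres of the submersion $H_k^n$, whose only critical point is $\n0$; thus the leaf through $\nz\neq\n0$ has tangent space the complex hyperplane $T_\nz\L_\nz = \ker dH_k^n|_\nz$. At a point $\nz\in M^*$ we are therefore intersecting the complex line $\C\nz$ with the complex hyperplane $\ker dH_k^n|_\nz$. Both are complex subspaces, so their sum is again a complex subspace, and it equals $\C^n$ exactly when $\nz\notin\ker dH_k^n|_\nz$, i.e. when $dH_k^n|_\nz(\nz)\neq 0$; when this holds the real sum is all of $\R^{2n}$, which is transversality. By Euler's identity for the degree-$k$ homogeneous polynomial $H_k^n$ one has $dH_k^n|_\nz(\nz) = k\,H_k^n(\nz)$, so transversality at $\nz$ is equivalent to $H_k^n(\nz)\neq 0$.

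It remains to verify that $H_k^n$ does not vanish on $M^*$, which is the heart of the matter. Fix $\nz\in M^*$ with $\nz\neq\n0$ and pick an index $i$ with $z_i\neq 0$. By Proposition \ref{M-var-analitica} the points of $M$ satisfy $\lambda_j z_j^{k-1}\oz_i = \lambda_i z_i^{k-1}\oz_j$ for all $j$; multiplying by $z_j$ and summing over $j=1,\dots,n$ gives
$$\oz_i\,H_k^n(\nz) = \lambda_i\,z_i^{k-1}\,Q(\nz)\,.$$
Because $z_i\neq 0$, $\lambda_i\neq 0$ and $Q(\nz)>0$ for $\nz\neq\n0$, the right-hand side is non-zero, whence $H_k^n(\nz)\neq 0$. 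Together with the preceding paragraph this establishes transversality of $\H_k$ and $M^*$ everywhere. The only delicate point is this final identity, which says geometrically that $M^*$ never meets the special fibre $\{H_k^n=0\}\setminus\{\n0\}$; once that is in hand, the rest is the elementary linear algebra of a complex line meeting a complex hyperplane.
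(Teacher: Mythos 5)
Your proof is correct, and it takes a genuinely different route from the paper's. The paper argues entirely at the level of the defining equations: for each generator $v_j$ of $T_\nz\L_\nz$ it computes the real inner products $\left<v_j,grad\,\psi_j\right>_\R$, $\left<v_j,grad\,\phi_j\right>_\R$ (and likewise for $iv_j$), uses the norm relation $|\lambda_1||z_1|^{k-2}=|\lambda_j||z_j|^{k-2}$ on $M$ together with the polar decomposition of the cross term $B_j$, and reduces non-orthogonality to the non-vanishing of $-1\pm(k-1)$ --- which is precisely where $k>2$ enters and which parallels the hands-on computations of Example \ref{ej:campo-Pham}, where no first integral is available. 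You instead exploit the two structural features of the Fermat case: the $\C^*$-invariance of $M$ (giving $T_\nz M^*=\C\nz$ once Proposition \ref{p:homogeneous-smooth} pins the dimension down to $2$ --- this is where your use of $k>2$ is hidden, so you should say so explicitly) and the Euler identity $dH^n_k|_\nz(\nz)=k\,H^n_k(\nz)$, which reduces everything to showing $M^*\cap\{H^n_k=0\}=\emptyset$; your identity $\oz_i\,H^n_k(\nz)=\lambda_i z_i^{k-1}Q(\nz)$ on $M$ settles that cleanly. Your approach buys several things: it replaces the check of individual basis vectors by the observation that a complex line and a complex hyperplane are transversal as soon as the line is not contained in the hyperplane (so there is no need to worry about general linear combinations of the $v_j$, $iv_j$); it shows the geometrically interesting fact that the polar variety avoids the special fibre; and it yields as a by-product that each component of $M^*$ is a punctured complex line, which the paper establishes separately at the end of the proof of Theorem \ref{Fermat}. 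What it gives up is generality: the argument is tied to homogeneity and to the existence of a first integral, whereas the paper's computational scheme transfers to the non-integrable, non-homogeneous vector-field examples treated elsewhere in the text.
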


\begin{proof}
Assume $z_1 \neq 0$ and let
 $\psi_j$, $\phi_j$ and $v_j$ be as in the proof of Proposition \ref{p:homogeneous-smooth}, for $j= 2, \dots, n$. 
 The vectors $\{v_2, iv_2, \dots, v_n, i v_n\}$ form a basis  of $T_\nz \L_\nz$ as a vector space over $\R$. 
 Notice  that  the leaf $\L_{\nz}$ is not transversal to $M^*$ at $\nz$ if and only if  
there exists a  vector  $v$ in $T_{\nz}\L_{\nz}$  orthogonal to $grad \, \phi_j$ and $grad \, \psi_j$ for all 
$j= 2, \dots, n$.

Let $  \overline \nabla $ be as in Section \ref{section:teoremas}, so for each $j$ one has 
$  \overline \nabla \psi_j(\nz) = (\frac{\partial \psi_j}{\partial \oz_1}, \dots, \frac{\partial \psi_j}{\partial \oz_n})\,.$
Then:
$$ 2 \left< F({\nz}),  \overline \nabla \psi({\nz}) \right>_\mathbb{C} = \left< F({\nz}), grad \, \psi({\nz}) \right>_\mathbb{R}
- i \left<i F({\nz}), grad \, \psi({\nz}) \right>_\mathbb{R}\,.$$

\noindent
Remember that $v_j$ belongs to  $T_\nz M^*$ if and only if  the products 
$\left< v_j, grad \,  \psi_l\right>_{\mathbb{R}} $ and  $ \left< v_j, grad \, \phi_l\right>_{\mathbb{R}}$ 
are zero for all  $l= 2, \dots, n$.
In particular we   consider the  case $l=j$. We set:
\begin{eqnarray*}
 A_j:= |\lambda_1|^2 |z_1|^{2k-2} + |\lambda_j|^2 |z_j|^{2k-2},\\   
B_j:=(k-1)[\lambda_1 \lambda_j z_j^{k-1} \oz_j z_1^{k-2}  +\lambda_1 \lambda_j z_1^{k-1} \oz_1 z_j^{k-2}] \,.
\end{eqnarray*}
Then:
$$\left< v_j, \overline \nabla \psi_j \right>_{\mathbb{C}} = -A_j +B_j \quad \hbox{and} \quad  \left< v_j, \overline \nabla \phi_j \right>_{\mathbb{C}} = iA_j+ iB_j\,.$$
Hence:
$$\left< v_j, grad \,  \psi_j\right>_{\mathbb{R}} = - 2\, A_j + 2\, Re \, B_j \quad , \quad 
\left< iv_j, grad \,  \psi_j\right>_{\mathbb{R}} = - \, 2\, Im \, B_j\,,$$
$$ \left< v_j, grad \,  \phi_j\right>_{\mathbb{R}} = -\, 2\, Im \, B_j \quad \hbox{and} \quad
 \left< iv_j, grad \,  \phi_j\right>_{\mathbb{R}} =2\, A_j + 2\, Re  \, B_j\,.$$
Now,  using the equation $|\lambda_1| |z_1|^{k-2} = |\lambda_j||z_j|^{k-2}$ 
and the polar decomposition of $B_j$ we get:
$$
 B_j =  (k-1) [ |\lambda_1| |\lambda_j| |z_j|^k |z_1|^{k-2}+  |\lambda_1| |\lambda_j| |z_1|^k |z_j|^{k-2}]
 e^{i (\theta_{\lambda_1}+ \theta_{\lambda_j})}
e^{i(k-2) (\theta_1+ \theta_j)}
$$
$$
  =  (k-1) [ |\lambda_j|^2 |z_j|^{2k- 2}+  |\lambda_1|^2  |z_1|^{2k-2}]
 e^{i (\theta_{\lambda_1}+ \theta_{\lambda_j})} e^{i(k-2) (\theta_1+ \theta_j)}    \; .  \quad \qquad $$

Assume $v_j$ is orthogonal to $grad \,  \phi_j$, {\it i.e.}, $-Im \,B_j=0$, then 
$$B_j=\pm (k-1) [ |\lambda_j|^2 |z_j|^{2k- 2}+  |\lambda_1|^2  |z_1|^{2k-2}] \,.$$ Therefore,
$$\left< v_j, grad \,  \psi_j \right>_\R=2\,( |\lambda_j|^2 |z_j|^{2k- 2}+  |\lambda_1|^2  |z_1|^{2k-2}) [-1 \pm (k-1)] \;.$$
Since  $z_1\neq 0$,  we must have  $\left< v_j, grad \,  \psi_j \right>_\R \neq 0$. Similar computations
work also for $iv_j(\nz)$. Hence we get that, for all $j$, neither 
$v_j(\nz)$ nor  $iv_j(\nz)$  are  in $T_\nz M$, 
 so we arrive to Proposition \ref{transversal}.
\end{proof}

To complete the proof of 
Theorem \ref{Fermat} we must show that the polar variety $M$ consists of a finite number of lines through the origin. 
From Proposition \ref{p:homogeneous-smooth} we know that $M$ has real dimension 2, it is smooth away from $\n0$ and 
it is analytic at $\n0$. The result will be proved if we show that $M$ is a union of complex lines. This is a 
consequence of the fact that $M$  is defined as the zero set of  polynomials of the form
$P_{j,l}(\nz):= \frac{\partial h_k}{\partial z_j} \oz_l - \frac{\partial h_k}{\partial z_l} \oz_j$, 
with   $\frac{\partial h_k}{\partial z_j}$  homogeneous (in the complex sense) 
of degree $k-1$. 
\qed

\medskip

Now consider an arbitrary homogeneous polynomial $h$
 of degree $k$ in $n$ complex variables, with a unique critical
point at  $\n0 \in \C^{n}$, and its corresponding foliation $\mathcal H$. 
Let  $\mathcal C$ be  the   space of coefficients of
homogeneous polynomials of
degree $k$  in $n$ complex variables. 
The general homogeneous
equation of degree $k$  in $n$  variables is:
$$\sum_{\alpha_1 + \cdots + \alpha_n = k} a_{\alpha_1, \cdots , \alpha_n}
z_1^{\alpha_1}...\,z_n^{\alpha_n}\,.$$  
   The polynomials
defining  hypersurfaces in $\C^n$  with non-isolated singularities correspond to a Zariski closed subset 
of $\mathcal C$.
 Hence its complement $\Omega$  is connected, and therefore every homogeneous polynomial 
 of degree $k$ in $n$ complex variables with a unique critical
point at  $\n0 \in \C^{n}$, is isotopic to a
  Fermat polynomial $H^n_k:= z^k_1 +...+z^k_n$.   Then there exists an isotopy $\mathcal I$ of $\C^n$, fixing the 
origin $\n0$, carrying the foliation $\mathcal H$ of  $h$ into the foliation $\H_k$ of $H^n_k$, which carries the 
Morse structure of the quadratic form $Q$, by Theorem \ref {Fermat}. We obtain

\begin{corollary}
Let  $\mathcal H$ be a  codimension 1, homogeneous  holomorphic foliation germ in $\C^n$. 
 Then $\mathcal H$ is isotopic to the foliation $\H_k$ of $H^n_k$, and therefore carries a Morse structure 
compatible with  a Morse function isotopic to 
the quadratic form $Q$.
\end{corollary}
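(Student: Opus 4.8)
The plan is to make precise the three-step argument sketched just above the statement: reduce $\mathcal H$ to a homogeneous polynomial, connect that polynomial to the Fermat polynomial inside the space of polynomials with an isolated singularity, and then upgrade this path of polynomials to an ambient isotopy of $\C^n$ carrying leaves to leaves. First I would invoke Malgrange's theorem to write $\mathcal H$ as the foliation $\mathcal H_h$ by the fibres of a holomorphic first integral, which by homogeneity and the hypothesis of a unique singular point we may take to be a homogeneous polynomial $h$ of degree $k$ with an isolated critical point at $\n0$. Regarding $h$ as a point of the coefficient space $\mathcal C\cong\C^N$, the subset $\Sigma\subset\mathcal C$ of polynomials whose projective zero locus is singular (equivalently, whose affine zero locus has a non-isolated singularity) is the vanishing locus of a discriminant, hence Zariski closed, and it is proper since $H^n_k\notin\Sigma$. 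Therefore $\Omega:=\mathcal C\setminus\Sigma$ is a nonempty Zariski-open, and hence path-connected, subset of $\C^N$, and I can choose a real-analytic path $\{h_t\}_{t\in[0,1]}\subset\Omega$ with $h_0=h$ and $h_1=H^n_k$. Every $h_t$ is homogeneous of degree $k$ with an isolated singularity, so the family has constant Milnor number $\mu(h_t)=(k-1)^n$.

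The core of the proof is to promote this path to an ambient isotopy $\Phi_t$ of $\C^n$, with $\Phi_0=\mathrm{id}$ and $\Phi_t(\n0)=\n0$, satisfying $h_t\circ\Phi_t=\psi_t\circ h_0$ for a family of homeomorphisms $\psi_t$ of the target $(\C,0)$. Any such $\Phi_t$ sends level sets of $h_0$ to level sets of $h_t$, hence carries the leaves of $\mathcal H_{h_0}=\mathcal H$ onto the leaves of $\mathcal H_{h_t}$, and taking $\mathcal I:=\Phi_1$ then carries $\mathcal H$ onto $\H_k$. The existence of $\Phi_t$ is precisely the triviality of a $\mu$-constant family of isolated hypersurface singularities, which I would obtain by applying Thom's first isotopy lemma to a Whitney stratification of the total space of $(z,t)\mapsto(h_t(z),t)$, whose only small stratum is $\{\n0\}\times[0,1]$; this trivializes the family and produces the $\Phi_t$ (the topological statement is the L\^e--Ramanujam theorem). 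I expect this to be the main obstacle, and the delicate point is \emph{smoothness}: the isotopy lemma a priori yields only homeomorphisms, whereas a differentiable $\Phi_t$ would require infinitesimal triviality, i.e.\ $\partial_t h_t\in J(h_t)$, the Jacobian ideal, which can fail because the degree-$k$ part of the Milnor algebra $\C[z]/J(h_t)$ need not vanish. Here homogeneity is the extra structure I would exploit: each $\mathcal H_{h_t}$ is a cone, so away from $\n0$ one may rescale and build an integrable, homogeneous vector field on the total space whose flow realizes a \emph{differentiable} isotopy, which then cones off to fix the origin. Securing this differentiable trivialization is the heart of the matter.

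With a smooth $\mathcal I$ in hand, the conclusion is routine. By Theorem \ref{Fermat}, $\H_k$ carries the Morse structure of $Q$, so I would set $g:=Q\circ\mathcal I$: for each leaf $L$ of $\mathcal H$, the image $\mathcal I(L)$ is a leaf of $\H_k$, whence $g|_L=(Q|_{\mathcal I(L)})\circ(\mathcal I|_L)$ is the composition of the Morse function $Q|_{\mathcal I(L)}$ with a diffeomorphism, hence Morse; thus $\mathcal H$ carries the Morse structure of $g$. Finally $g$ is isotopic to $Q$ through the family $g_t:=Q\circ\Phi_t$, each member being the pull-back of $Q$ by a diffeomorphism fixing $\n0$, and hence a Morse function with a single critical point at $\n0$ of index $0$ whose level sets are spheres. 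This yields the corollary.
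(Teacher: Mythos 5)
Your proposal follows essentially the same route as the paper: reduce $\mathcal H$ to a homogeneous first integral via Malgrange, use that the polynomials with non-isolated singularities form a proper Zariski-closed subset of the coefficient space so its complement is connected, join $h$ to the Fermat polynomial $H^n_k$ by a path in that complement, promote the path to an ambient isotopy $\mathcal I$ carrying leaves to leaves, and pull back $Q$ through $\mathcal I$ using Theorem \ref{Fermat}. In fact you are more careful than the paper, which simply asserts the existence of the isotopy; your identification of the passage from a $\mu$-constant path to a (differentiable) ambient trivialization as the delicate step is exactly the point the paper glosses over.
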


The results in this section, together with the aforementioned theorem of Ito-Scardua and other known facts, suggest the
 following:

\medskip
\noindent
{\bf Conjecture.} Let $\F$ be a  holomorphic foliation in an open neighbourhood $\U$ of $\n0 \in \C^n$, singular at $\n0$.
 Then there is an open set $\mathcal G$ in the space of all Morse functions with a critical point at $\n0$ with 
Morse index $0$, such that for every $g \in \mathcal G$, the contacts of $\F$ with the fibers of $g$ in a 
neighbourhood of $\n0$ are all non-degenerate.

\medskip
{\it Acknowledgement}. The authors are grateful to professors Alberto Verjovsky, Xavier G\'omez-Mont, C\'esar Camacho, Bruno Scardua, Paulo Sad and Bernard Teissier, for very helpful and fruitful conversations.

\medskip

\vspace{0.1cm}{\bf Beatriz Lim\'on and Jos\'e Seade:}

Instituto de Matem\'aticas, Unidad Cuernavaca, 

Universidad Nacional Aut\'onoma de M\'exico,

Av. Universidad s/n, Lomas de Chamilpa, C.P. 62210,

Cuernavaca, Morelos, M\'exico.

\medskip

{email: betlimon@matcuer.unam.mx, jseade@matcuer.unam.mx}

\end{document}